\theoremstyle{plain}
\newtheorem{theorem}{Theorem}[section]
\newtheorem{proposition}[theorem]{Proposition}
\newtheorem{corollary}[theorem]{Corollary}
\newtheorem{lemma}[theorem]{Lemma}
\DeclareMathOperator{\piprod}{\raisebox{-0.1em}{\huge{$\pi$}}\kern -0.2em}
\theoremstyle{definition}
\newtheorem{definition}[theorem]{Definition}
\newtheorem{example}[theorem]{Example}
\newtheorem{examples}[theorem]{Examples}
\newtheorem{nonexample}[theorem]{Nonexample}
\newtheorem{remark}[theorem]{Remark}
\newtheorem*{example*}{Example}
\newcommand{\ca}{\mathcal {A}}
\newcommand{\cac}{\mathcal {C}}
\newcommand{\cE}{\mathcal {E}}
\newcommand{\cf}{\mathcal {F}}
\newcommand{\cg}{\mathcal {G}}
\newcommand{\ch}{\mathcal {H}}
\newcommand{\cm}{\mathcal {M}}
\newcommand{\cp}{\mathcal {P}}
\newcommand{\cs}{\mathcal {S}}
\newcommand{\cu}{\mathcal {U}}
\newcommand{\cw}{\mathcal {W}}
\newcommand{\cz}{\mathcal {Z}}
\newcommand{\cc}{{\mathbb C}}
\newcommand{\qq}{{\mathbb Q}}
\newcommand{\rr}{{\mathbb R}}
\newcommand{\sphere}{{S}}
\newcommand{\zz}{{\mathbb Z}}
\newcommand{\minus}{{-1}}
\newcommand{\ii}{|I|}
\newcommand{\bA}{{\mathbf A}}
\newcommand{\bB}{{\mathbf B}}
\newcommand{\bR}{{\mathbb R}}
\newcommand{\bx}{{\mathbf x}}
\newcommand{\boldC}{{\mathbf C}}
\newcommand{\bC}{{\mathbb C}}
\newcommand{\bE}{{\mathbf E}}
\newcommand{\bG}{{\mathbf G}}
\newcommand{\boldb}{{\mathbf b}}
\newcommand{\zl}{{\cz_L}}
\newcommand{\zlone}{{\cz_{L^1}}}
\newcommand{\zll}{{\cz_{L'}}}
\newcommand{\zx}{{\zl(\bA,\bB)}}
\newcommand{\tw}{{\tilde{w}}}
\newcommand{\tx}{{\tilde{x}}}
\newcommand{\wt}{\widetilde}
\newcommand{\wa}{\widetilde {A}}
\newcommand{\wb}{\widetilde {B}}
\newcommand{\bwa}{\widetilde {\mathbf A}}
\newcommand{\bwb}{\widetilde {\mathbf B}}
\newcommand{\ab}{(\bA,\bB)}
\newcommand{\wab}{(\bwa,\bwb)}
\newcommand{\ww}{\widetilde {W}}
\newcommand{\wx}{\widetilde {X}}
\newcommand{\wy}{\widetilde {Y}}
\newcommand{\wzl}{\widetilde {\cz}_L}
\newcommand{\wzlone}{\widetilde {\cz}_{L^1}}
\newcommand{\wzll}{\widetilde {\cz}_{L'}}
\newcommand{\wcac}{\widetilde {\cac}}
\newcommand{\wcm}{\widetilde {\cm}}
\newcommand{\wcu}{\widetilde {\cu}}
\newcommand{\ol}{\overline}
\newcommand{\oh}{\ol{H}}
\newcommand{\jcheck}{J\,\check{}}
\newcommand{\fjcheck}{f(J)\,\check{}}
\newcommand{\mfj}{\overset{.}{M}_{\fjcheck}}
\newcommand{\primecheck}{f(J')\,\check{}}
\def\clap#1{\hbox to 0pt{\hss#1\hss}}
\newcommand{\comment}[1]{}
\newcommand{\gd}{\delta}
\newcommand{\gf}{\varphi}
\newcommand{\gi}{\iota}
\newcommand{\gs}{\sigma}
\newcommand{\gG}{\Gamma}
\newcommand{\gD}{\Delta}
\newcommand{\gL}{\Lambda}
\newcommand{\od}{{\overline{D}}}
\newcommand{\vertex}{\operatorname{Vert}}
\newcommand{\edge}{\operatorname{Edge}}
\newcommand{\flag}{\operatorname{Flag}}
\newcommand{\Lk}{\operatorname{Lk}}
\newcommand{\cat}{\operatorname{CAT}}
\newcommand{\cone}{\operatorname{Cone}}
\newcommand{\supp}{\operatorname{Supp}}
\newcommand{\raag}{\operatorname{RAAG}}
\newcommand{\rab}{\operatorname{RAB}}
\newcommand{\racg}{\operatorname{RACG}}
\newcommand{\racs}{\operatorname{RACS}}
\newcommand{\Econe}{(\boldsymbol{\cone}(\mathbf E), \mathbf E)}
\newcommand{\bman}{(\mathbf M, \boldsymbol{\partial}\mathbf M)}
\newenvironment{enumerate1*}{
\begin{enumerate}[\upshape (*1)]}%
	{
\end{enumerate}
}
\newenvironment{enumeratei}{
\begin{enumerate}[\upshape (i)]}%
	{
\end{enumerate}
}
\newenvironment{enumeratei'}{
\begin{enumerate}[\upshape (i)$'$]}%
	{
\end{enumerate}
}
\newenvironment{enumeratea}{
\begin{enumerate}[\upshape (a)]}{
\end{enumerate}
}
\newenvironment{enumeratea'}{
\begin{enumerate}[\upshape (a)$'$]}{
\end{enumerate}
}
 \numberwithin{equation}{section} \setcounter{section}{0}
\begin{document}
\title{Right-angularity, flag complexes, asphericity}
\author{Michael W. Davis\thanks{The  author was partially supported by NSF grant DMS-1007068 and by the Institute for Advanced Study.}  }
\date{}
\maketitle
\begin{abstract}
The ``polyhedral product functor'' produces a space from a simplicial complex $L$ and a collection of pairs of spaces, $\{(A(i),B(i))\}$, where $i$ ranges over the vertex set of $L$.  We give necessary and sufficient conditions for the resulting space  to be aspherical. There are two similar constructions, each of which starts with a space $X$ and a collection of subspaces, $\{X_i\}$, where $i \in \{0,1\dots, n\}$, and then produces a new space.  We also give conditions for the results of these constructions to be aspherical.  All three techniques can be used to produce examples of closed aspherical manifolds.
	 \smallskip

	\paragraph{AMS classification numbers.} Primary: 20F65, 57M07, 57M10. \\
	Secondary: 20F36, 20E42, 20F55. \smallskip

	\paragraph{Keywords:} aspherical manifold, Davis-Januszkiewicz space, graph product, polyhedral product, right-angled Artin group, right-angled Coxeter group, right-angled building. 
\end{abstract}

\section*{Introduction}
This paper concerns three similar methods for  combining certain spaces (given as the  input data)  to construct  new spaces.  These constructions are closely related to a standard construction of a cubical complex with the link at each vertex a specified simplicial complex (cf.\ \cite[p.\ 212]{bh}, \cite[\S1.2]{dbook} or Example~\ref{ex:com}, below).  For each construction we prove a theorem which gives conditions for the resulting space to be aspherical.  We also give conditions for the result to be a closed manifold.  

The first result, Theorem~\ref{t:asphericalflag}, gives necessary and sufficient conditions for the polyhedral product of a collection of pairs of spaces, $\{(A(i),B(i))\}_{i\in I}$, to be aspherical.  (The notion of a ``polyhedral product'' is defined in \S\ref{ss:defex} as well as in \cite{bbcg,bbcg2}.) The second result, Theorem~\ref{t:flag}, gives necessary and sufficient conditions for the result of applying a reflection group trick to a corner of spaces to be aspherical.  (The notion of a ``corner of spaces'' is defined in \S\ref{s:corners}: roughly, it is a space $X$ together with a collection of subspaces $X_i$, indexed by $I(n)=\{0,1\dots, n\}$, such that all possible intersections of the $X_i$ are nonempty.)  The third construction also starts with a corner of spaces $X$, but now as additional data we are given a simplicial complex $L$ and a coloring of its vertices $f:L\to\gD$ (where $\gD$ is the simplex on $I(n)$). The third result, Theorem~\ref{t:asphericalcorner}, gives sufficient conditions for the ``pullback,'' $f^*(X)$, to be aspherical. A common thread in all three results  is that certain auxiliary simplicial complexes need to be flag complexes. 

Next, we make a few remarks about each construction in turn.  Given a simplicial complex $L$ and a collection of pairs $\ab=\{(A(i),B(i)\}_{i\in I}$, indexed by the vertex set $I$ of $L$, the polyhedral product $\zl\ab$ is defined as a certain subspace of the product $\prod_{i\in I} A(i)$.  If each $A(i)$ is simply connected and each $B(i)$ is connected, then $\zl\ab$ is simply connected; however, if either of these conditions fail, then the fundamental group of $\zl\ab$ is usually nontrivial.  We compute this group in Theorem~\ref{t:pi1}. Roughly speaking, it is a ``generalized graph product'' of the fundamental groups of the $A(i)$.  The conditions for $\zl\ab$ to be aspherical are the following:
\begin{enumeratei}
\item
each $A(i)$ is aspherical,
\item
for each ``nonconelike vertex'' $i\in I$, each path component of $B(i)$ is aspherical and its fundamental group injects into $\pi_1(A(i))$, and 
\item
$L$ is a flag complex.
\end{enumeratei}

Suppose $X$ has a corner structure $\cm=\{X_i\}_{i\in I(n)}$.  (The $X_i$ are called ``mirrors.'')  Suppose further that  $(W,S)$ is a spherical Coxeter system with its set of fundamental involutions $S$ indexed by $I(n)$.  (For example, $W$ could be $(\boldC_2)^{I(n)}$ where $\boldC_2$ denotes the group of order two.)  The ``basic construction'' of \S\ref{ss:basicconst} yields a space with $W$-action, $\cu(W,X)$. Necessary and sufficient conditions for  $\cu(W,X)$ to be aspherical are the following:
\begin{enumeratei'}
\item
$X$ is aspherical,
\item
for each $i\in I(n)$, any path component of the inverse image of $X_i$ in the universal cover $\wx$ of $X$ is acyclic and  similarly for the inverse images of arbitrary intersections of the $X_i$, and
\item
there is an ``induced mirror structure'' $\wcm$ on $\wx$ and an associated simplicial complex $N(\wcm)$, called its ``nerve,''  the  condition  being that $N(\wcm)$ is a flag complex.
\end{enumeratei'}

For the third construction, we are given a corner of spaces $X$; however, in place of a Coxeter system, suppose we have a simplicial complex $L$ with a coloring of its vertices,  $f:L\to \gD$.  The space $f^*(X)$ is formed by pasting together copies of $X$ in a fashion specified by $L$.   Sufficient conditions for $f^*(X)$ to be aspherical are then conditions (i)$'$, (ii)$'$, (iii)$'$, above, as well as, the condition that $L$ be a flag complex.

A corner of spaces is a \emph{corner of manifolds} if the space in question is a manifold with corners and if the mirrors are strata of codimension one.  There are two good sources of corners of manifolds which satisfy conditions (i)$'$, (ii)$'$, (iii)$'$ above:
\begin{itemize}
\item
(\emph{Products of aspherical manifolds with boundary}). For each $i\in I(n)$, suppose $(M(i),\partial M(i))$ is an aspherical manifold with aspherical, $\pi_1$-injective boundary.  Put $M=\prod_{i\in I(n)} M(i)$ and let $M_i$ be the set of points $x\in M$ such that $x_i\in \partial M(i)$.  Then $\{M_i\}_{i\in I(n)}$ is a corner structure on $M$ satisfying the three conditions (cf.\ \S\ref{ss:aspflag}).
\item
(\emph{Borel-Serre compactifications}).  Suppose $\bG$ is an algebraic group defined over $\qq$ with real points $G$.  Let $D$ be the associated symmetric spaces.  Suppose $\gG$ is a  torsion-free, arithmetic subgroup of $G$ so that $\gG\backslash D$ is a manifold.  If the $\qq$-rank  of $\bG$ is greater than $0$, then $\gG\backslash D$ is not compact.  If the $\qq$ rank of $\bG$ is $n+1$, then $\gG\backslash D$ has a ``Borel-Serre compactification'' $M$ which is a corner of manifolds over $I(n)$.  As we shall see in \S\ref{ss:aspflag}, the corner structure on $M$ satisfies  conditions (i)$'$, (ii)$'$, (iii)$'$, above.
\end{itemize}

I became interested in these examples after  conversations with Tam Nguyen Phan, a graduate student at the University of Chicago.  She 
showed that one could obtain aspherical manifolds by applying the reflection group trick to  Borel-Serre compactications.  This led me to start thinking about  more general situations and conditions (i)$'$, (ii)$'$, (iii)$'$.

My thanks go to  the referee for  carefully reading the manuscript.

\section{Basic definitions}\label{s:basic}
\subsection{Mirror structures} 
A \emph{mirror structure} on a space $X$ over a set $I$ is a family of closed subspaces $\cm=\{X_i\}_{i\in I}$.  
For each subset $J\le I$, put 
	\begin{equation}\label{e:XJ}
	X_J:=\bigcap_{i\in J} X_i, \qquad X^J:=\bigcup_{i\in J} X_i
	\end{equation}
Also, $X_\emptyset =X$ and $X^\emptyset =\emptyset$.  
For each $x\in X$, put  
	\begin{equation}\label{e:Ix}
	I(x):=\{i\in I\mid x\in X_i\}.
	\end{equation}  
The \emph{nerve of the mirror structure} is the simplicial complex $N(\cm)$ with vertex set $I$ defined by the requirement that a subset $J\le I$ spans a simplex if and only if $X_J\neq \emptyset$.

Suppose that $X$ is a mirrored space over $I$, that $G=\pi_1(X)$ and that $p:\wx\to X$ is the universal covering.  One can then define the ``induced mirror structure on $\wx$,'' as follows.  For each $i\in I$, let $E_i$ be the $G$-set of path components of $p^\minus(X_i)$.  Let $E$ be the disjoint union of the $E_i$. 
The \emph{induced mirror structure} $\wcm=\{\wx_e\}_{e\in E}$ on $\wx$ is defined by $\wx_e:= e$.

\subsection{Coxeter groups}\label{ss:cox}  
A \emph{Coxeter matrix} $(m(i,j))$ on a set $I$ is an $I\times I$ symmetric matrix with $1$'s on the diagonal and with off-diagonal entries integers $\ge 2$ or the symbol $\infty$.  A Coxeter matrix defines a \emph{Coxeter group} $W$ with generating set $S:=\{s_i\}_{i\in I}$ and with relations:
	\[
	(s_is_j)^{m(i,j)}=1, \quad \text{for all $(i,j)\in I\times I$}
	\]
Since each diagonal entry is equal to $1$, this entails that each $s_i$ is an involution.  (If $m(i,j)=\infty$, the above relation is simply omitted from the presentation.)  The pair $(W,S)$ is a \emph{Coxeter system}.  For  a subset  $J\le I$, let $S_J:=\{s_i\mid i\in J\}$ and let $W_J:=\langle S_J\rangle$ be the subgroup generated by $S_J$.  $W_J$ is called a \emph{special subgroup} of $W$.  The pair $(W_J,S_J)$ is a Coxeter system (cf.~\cite[Thm.~4.1.6, p.~45]{dbook}).  If $W_J$ is a finite group, then $J$ is a \emph{spherical subset} of $I$ and any conjugate of $W_J$ is a \emph{spherical subgroup}.  Let $\cs(W,S)$ denote the poset of spherical subsets of $I$.  There is an associated simplicial complex $L(W,S)$ called the \emph{nerve of $(W,S)$}, whose poset of simplices is $\cs(W,S)$.  In other words, the vertex set of $L(W,S)$ is $I$ and a nonempty subset of $I$ spans a simplex if and only if it is spherical.	

A Coxeter matrix is \emph{right-angled} if all its off-diagonal are equal to $2$ or $\infty$.  Similarly, the associated Coxeter system is a \emph{right-angled Coxeter system} (a $\racs$) and the group is a \emph{right-angled Coxeter group} (a $\racg$).

\subsection{Buildings}\label{ss:bldgs} 
A \emph{chamber system} is set $\cac$ (of ``chambers'') together with a family of equivalence relations (``adjacency relations'') indexed by $I$.  For $i\in I$, an $i$-equivalence class is called an \emph{$i$-panel}.  Two chambers are \emph{$i$-adjacent} if they are $i$-equivalent and not equal.  A \emph{gallery} in $\cac$ is a sequence $C_0, \dots, C_k$ of adjacent chambers.  If $C_{m-1}$ is $i_m$-adjacent to $C_m$, then the \emph{type} of the gallery is the word $i_1\cdots i_k$ in $I$. 

Suppose $(W,S)$ is a Coxeter system, where $S=\{s_i\}_{i\in I}$ is indexed by $I$.  Given a word $i_1\cdots i_k$ in $I$, its \emph{value} in $W$ is the element $s_{i_1}\cdots s_{i_m}$ of $W$.  A \emph{building} of type $(W,S)$ is a chamber system $\cac$ of type $I$ together with a \emph{Weyl distance} $\gd:\cac\times \cac \to W$ so that 
\begin{itemize}
\item
each panel has at least two elements, and
\item
given chambers $C,D \in \cac$ with $\gd(C,D)=w$, there is a gallery of minimal length from $C$ to $D$ of type $i_1\cdots i_n$ if and only if  $s_{i_1}\cdots s_{i_n}$ is a reduced expression for $w$.
\end{itemize}
Given a subset $J\le I$, chambers $C$ and $D$ are said to belong to the same \emph{$J$-residue} if they can be connected by a gallery such that all letters in its type belong to $J$.  A singleton $\{C\}$ is a $\emptyset$-residue; an $i$-panel is the same thing as an $\{i\}$-residue (cf.\ \cite{ab}, \cite{ronan}).

A building is \emph{spherical} if its type is a spherical Coxeter system.  A building  is a \emph{right-angled building} (a $\rab$) if its type is a $\racs$.
\begin{example}\label{ex:rank1.1}(\emph{Products of rank 1 buildings}).
A rank one building (i.e., a building of type $\bA_1$) is just a set in which every two distinct elements are adjacent.  Suppose $\cac_0,\dots, \cac_n$ are rank one buildings.  The product $\cac_0 \times \cdots \times \cac_n$ can be given the structure of a building of type $\bA_1\times \cdots \times\bA_1$ as follows.  For $i\in \{0,1,\dots ,n\}$, elements $(c_0,\dots ,c_n)$ and $(d_0,\dots , d_n)$ are defined to be $i$-equivalent if $c_j=d_j$ for all $j\neq i$. This gives the product the structure of a chamber system; the Weyl distance is then defined in the obvious manner.  The associated Coxeter group is $\boldC_2\times \cdots \times \boldC_2$; hence, the product is spherical and right-angled. (More generally one can define the product of chamber systems or of buildings in a similar fashion, cf.\  \cite[p.~2]{ronan}.)
\end{example}

\subsection{Flag complexes}\label{ss:flag} 
Suppose $L$ is a simplicial complex with vertex set $I$.  Let $\cs(L)$ be the poset of (vertex sets of) simplices in $L$, including the empty simplex.   
\begin{definition}\label{d:flag}
$L$ is a \emph{flag complex}  if for any subset $J$ of $I$ such that every two elements of $J$ bound an edge of $L$, then $J\in \cs(L)$.  (In other words, $L$ is flag if every nonsimplex contains a nonedge.)
\end{definition}

Any simplicial graph $L^1$ determines a flag complex $L$ with $L^1$ as its $1$-skeleton as follows: a finite subset $J\subset \vertex(L^1)$ spans a simplex of $L$ if and only if it spans a complete subgraph in $L^1$.  (Combinatorialists say that $L$ is the \emph{clique complex of $L^1$}.)

\begin{example}\label{ex:posetflag}
If $\cp$ is a poset, then the simplicial complex $\flag (\cp)$ whose simplices are all nonempty finite chains $p_0<\cdots < p_n$ is a flag complex.  This means that the barycentric subdivision of any convex cell complex is a flag complex (cf.\ \cite[Example~A.5, p.~663]{ab} or  \cite[Appendix~A.3]{dbook}).
\end{example}

More details about flag complexes  can be found in \cite[Appendix~A.1.2]{ab} or \cite[Appendix~A.1]{dbook}.

\paragraph{The complex $K(L)$ and its mirror structure.} 
The space $K(L)$ is defined to be the simplicial complex $\flag(\cs(L))$ (cf.\  Examples~\ref{ex:posetflag} (3)).  The complex $K(L)$ is isomorphic to the cone on the barycentric subdivision of $L$ (the empty set provides the cone point).  There is a canonical mirror structure over $I$ on $K(L)$ defined by $K(L)_i:= \flag(\cs(L)_{\ge \{i\}})$.  In other words, $K(L)_i$ is the star of the vertex $i$ in the barycentric subdivision of $L$.  It  follows from the definition in \eqref{e:XJ} that $K(L)_J$ is nonempty if and only if $J\in \cs(L)$, in which case,
	\begin{equation}\label{e:mirror}
	K(L)_J=\flag(\cs(L)_{\ge J}).
	\end{equation}
	
\begin{definition}\label{d:davischam}
If $L=L(W,S)$ is the nerve of a Coxeter system, then $K(W,S):=K(L(W,S))$ is its \emph{Davis chamber}.  (In other words, $K(W,S)=\flag(\cs(W,S))$.)
\end{definition}

\subsection{The basic construction}\label{ss:basicconst}  
Suppose $X$ is a mirrored space with its mirrors indexed by a set $I$ and that $(W,S)$ is a Coxeter system with its fundamental set of generators $S$ indexed by the same set $I$. Define an equivalence relation $\sim$ on $W\times X$ by 
	\[
	(w,x)\sim (w',x') \iff x=x' \text{ and } wW_{I(x)}=w'W_{I(x)},
	\]
where $I(x)$ is defined by \eqref{e:Ix}.  The \emph{basic construction} on this data is the $W$-space, 
	\begin{equation}\label{e:basic}
	\cu(W,X):=(W\times X)/\sim.
	\end{equation}
More generally, if $\cac$ is a building of type $(W,S)$, define $\sim$ on $\cac \times X$ by
	\[
	(C,x)\sim (C',x') \iff x=x' \text{ and $C$, $C'$ belong to the same $I(x)$-residue.}
	\]
As before, $\cu(\cac, X):= (\cac \times X)/\sim$.

The simplex $\gD$ on $I$ has a mirror structure $\{\gD_i\}_{i\in I}$ where $\gD_i$ is defined to be the codimension-one face opposite the vertex $i$.  The \emph{classical realization} of a building $\cac$ is defined to be $\cu(\cac, \gD)$.  When $\cac$ is the thin building $W$, then the classical realization $\cu(W,\gD)$ is called the \emph{Coxeter complex}.  If $W$ is a spherical, we can identify $\gD$ with a spherical simplex and $\cu(W,\gD)$ with the round sphere.  If $\cac$ is a spherical building of type $(W,S)$, then $\cu(\cac,\gD)$ is called the \emph{spherical realization} of $\cac$.  For example, the spherical realization of a product of rank one buildings $\cac_0\times \cdots\times\cac_n$ (cf.\  Example~\ref{ex:rank1.1}) is equal to their join, 
\[
\cu(\cac_0\times\cdots \times \cac_n, \gD^n)=\cac_0*\cdots *\cac_n).
\]

\begin{definition}\label{d:standard}
The \emph{standard realization} of a building $\cac$ of type $(W,S)$ is $\cu(\cac,K)$, where $K=K(W,S)$, the Davis chamber of $(W,S)$ (cf.\  Definition~\ref{d:davischam}).
\end{definition}

\begin{examples}\label{ex:flags} (\emph{More examples of flag complexes}). 

(1) If $E_0, \dots E_n$ are discrete sets, then their join, $E_0*\cdots *E_n$ is an $n$-dimensional simplicial complex and a flag complex.
(When each $E_i$ has at least two elements this is the classical realization of $E_0\times \cdots \times E_n$, a spherical building of type $\bA_1\times \cdots \times \bA_1$; so, the fact that it is a flag complex is a special case of (2), below.)

(2) The classical realization of any building is a flag complex (cf.\  \cite[Ex.~4.50, p.{} 190]{ab}).  
In particular, the Coxeter complex of any Coxeter system $(W,S)$ is a flag complex.

(3) Let $\ch$ be any simplicial arrangement of linear hyperplanes in $\rr^n$.  Then the nonzero faces of $\ch$  define a simplicial complex $\gL(\ch)$ which is a triangulation of $\sphere^{n-1}$.  It turns out that $\gL(\ch)$ is a flag complex (see \cite[p. 29]{oldbrown}).

(4) The Deligne complex (cf.\ \cite{deligne} or \cite{cdjams}) of an Artin group of spherical type is a flag complex.  (This is proved  in \cite[Lemma~4.3.2]{cdjams}, where the proof occupies pages 623--626.)  Similarly, the Deligne complex of any real simplicial arrangement is a flag complex.

(5) The link of a simplex in a flag complex is easily seen to be a flag complex (cf.\ \cite[Remarks~5.16, p.\,210]{bh}).
\end{examples}

\section{Polyhedral products}\label{s:pp}

\subsection{Definitions and examples}\label{ss:defex}
Suppose we are given a family of pairs of spaces $(\bA,\bB)=\{(A(i),B(i))\}_{i\in I}$.  We shall always assume that  $A(i)$ is path connected and that $B(i)$ is a nonempty, closed subspace of $A(i)$.  Let $\bx:=(x_i)_{i\in I}$ denote a point in the product $\prod _{i\in I} A(i)$.  Put 
	\[
	\supp(\bx):=\{i\in I\mid x_i\in A(i)-B(i)\}.  
	\]
Define the \emph{polyhedral product} $\zx$ to be the subset of $\prod_{i\in I} A(i)$ consisting of those $\bx$ such that $\supp(\bx)\in \cs(L)$.  (This terminology comes from \cite{bbcg,bbcg2}.
In \cite{densuc} it is called a ``generalized moment-angle complex.'')  If each $(A(i),B(i))$ is equal to the same pair, say $(A,B)$, then we shall write $\zl(A,B)$ instead of $\zx$.  Here is another way to define the same thing.  For each $J\in \cs(L)$, let $\cz_J(\bA,\bB):=\{\bx\in \prod A(i)\mid \supp(\bx)\le J\}$.  In other words, 
	\begin{equation*}\label{e:J}
	\cz_J(\bA,\bB)=\prod_{i\in J} A(i) \times \prod_{i\in I-J} B(i).
	\end{equation*}
(There is a slight abuse of notation here, since the coordinates in the product do not have a natural order.) 
The polyhedral product is defined by
	\begin{equation*}\label{e:polyprod}
	\zx=\bigcup_{J\in \cs(L)} \cz_J(\bA,\bB).
	\end{equation*}

\begin{example}\label{ex:dchamber}
Suppose each $A(i)$ is the unit interval $[0,1]$ and each $B(i)$ is the base point $1$.  The polyhedral product 
	\[
	K(L):=\zl([0,1],1)
	\] 
is called the \emph{chamber} associated to $L$.  
The geometric realization of the poset $\cs(L)$, 
$\flag(\cs(L))$, can be identified as a standard subdivision of $K(L)$ in such a fashion that a standard subdivision of each cube in the polyhedral product is a subcomplex of $\flag (\cs (L))$ (cf.\ \cite[Appendix A.3]{ab}, \cite[\S4.2]{bp} \cite[Ex.{} A.4.8, A.5.2]{dbook}).  
If $L$ is a triangulation of $S^{n-1}$, then $K(L)$ is an $n$-disk.  (For example, if $L$ is the boundary complex of a simplicial convex polytope, then $K(L)$ can be identified with the dual polytope.)
\end{example}

\begin{example}\label{ex:com}
Suppose each $(A(i),B(i))$ is $(D^1,S^0)$ where $D^1=[-1,1]$ and $S^0=\{\pm 1\}$.  The space $\zl (D^1,S^0)$
is featured in \cite[\S 1.2]{dbook}.  Let $\boldC_2:=\{\pm 1\}$ denote the cyclic group of order $2$.  Then $(\boldC_2)^I$ acts on the product $\prod_{i\in I} [-1,1]$  as a group generated by reflections and $\zl (D^1,S^0)$ is a $(\boldC_2)^I$-stable subspace.  Moreover, $K(L)$ is a strict fundamental domain for the action on $\zl (D^1,S^0)$ (because $[0,1]$ is a strict fundamental domain for $\boldC_2$ on $[-1,1]$).  Associated to the $1$-skeleton of $L$ there is a $\racg$, $W_L$ (See \cite[1.2]{dbook}.) It turns out that the universal cover of $\zl (D^1,S^0)$ can be identified with $\cu(W_L,K(L))$. The fundamental group of $\zl (D^1,S^0)$ is the kernel of the natural map $W_L\to (\boldC_2)^I$ (See Example~\ref{ex:trivial} and \S\ref{ss:pi1}, below.)
\end{example}

Suppose each $(A(i),B(i))$ is $(D^2, S^1)$, where $D^2$ denotes the unit disk in the complex numbers, $\bC$, and $S^1=\partial D^2$.  The space $\zl (D^2, S^1)$ is called the \emph{moment-angle complex} in \cite[Ch.\ 6]{bp}.  For $m=\ii$, the subspace $\zl(D^2, S^1)$ of $(D^2)^m \subset \bC^m$ is stable under the usual $T^m$-action, where $T^m:=(S^1)^m$.  As in the previous example, the quotient space is $K(L)$ ($=\zl([0,1],1)$.
If $L$ is a triangulation of $S^{n-1}$, then $\zl(D^2, S^1)$ is a closed $(n+m)$-manifold.  (In \cite{dj91} we were interested in ``toric manifolds'', $M^{2n}=\zl(D^2,S^1)/T^{m-n}$, for an appropriately chosen subgroup $T^{m-n}$ of $T^m$ acting freely on $\zl(D^2,S^1)$.  Some of these $M^{2n}$ are nonsingular toric varieties and are symplectic manifolds with a Hamiltonian $T^n$-action.)

Let $BS^1$ ($=\bC P^\infty$) be the classifying space for the Lie group $S^1$ and let $\xi$ be the canonical complex line bundle over 
$BS^1$.  The the total spaces of the associated $D^2$- and $S^1$-bundles are denoted $D(\xi)$ and $S(\xi)$, respectively.  So, $S(\xi)=ES^1$, the total space of the canonical principal $S^1$-bundle over $BS^1$.  Note that $D(\xi)$ is homotopy equivalent to $BS^1$ while $S(\xi)$ is contractible.  The classifying space for $T^m$ is the $m$-fold product of copies of $BS^1$.  \emph{Davis-Januszkiewicz space}, $DJ(L)$, is defined to be the Borel construction on $\zl(D^2, S^1)$, i.e.,
	\begin{equation*}\label{e:borel}
	DJ(L):=\zl(D^2, S^1) \times _{T^m} ET^m
	\end{equation*}
(cf.\ \cite{dj91} \emph{and} \cite{bbcg,bbcg2,bp,densuc}).
The space $DJ(L)$ can also be written as a polyhedral product: $DJ(L)=\zl(D(\xi),S(\xi))$ (cf.\ \cite{dj91}).  Since $D(\xi)\sim BS^1$ and $S(\xi)\sim *$ (where $\sim$ means homotopy equivalent and where $*$ is a base point), we have that $DJ(L)\sim \zl(BS^1,*)$.

\begin{example}\label{ex:realdj}(\emph{Real Davis-Januszkiewicz space}).
In a similar fashion, we can form the Borel construction on the polyhedral product in Example~\ref{ex:com}, to get the \emph{real Davis-Januszkiewicz space}, $DJ^\bR(L)$, 
	\begin{equation*}\label{e:realborel}
	DJ^\bR(L):=\zl(D^1, S^0) \times _{\boldC_2^m} E\boldC_2^m,
	\end{equation*}
where $B\boldC_2$ ($=\bR P^\infty$) is the classifying space for the cyclic $2$-group, $\boldC_2$, and $E\boldC_2$ is its universal cover.  As in the previous paragraph,
	\begin{equation}\label{e:dj2}
	DJ^\bR(L)=\zl(D(\xi),S(\xi))\sim \zl(B\boldC_2,*),
	\end{equation}
where now $D(\xi)$ means the canonical $D^1$-bundle over $B\boldC_2$ and $S(\xi)$ the $S^0$-bundle.  
\end{example}

\begin{example}\label{ex:RAAGs}
Let $1\in S^1$ be the standard base point.  Consider $\zl(S^1,1)$.  It is a subcomplex of the standard torus, $T^I$ ($=(S^1)^I$).  As we shall see in \S\ref{ss:pi1}, the fundamental group of $\zl(S^1,1)$ is the \emph{right-angled Artin group} (or $\raag$)  associated to the $1$-skeleton of $L$ (in other words, it is the graph product of infinite cyclic groups with respect to the graph $L^1$ (cf.\ Examples~\ref{ex:racg}\,(4), below). 
\end{example}

As we shall see in \S\ref{ss:pi1} and \S\ref{ss:aspprod},  $\zl(B\boldC_2,*)$ is the classifying space of the associated $\racg$ if and only if $L$ is a flag complex.  Similarly, $\zl(S^1,1)$ is the classifying space for the $\raag$ associated to $L^1$ if and only if $L$ is a flag complex (cf.\ \cite[\S3.2]{cd95} or Corollary~\ref{c:bgamma}, below).

\paragraph{Retractions.}  Suppose $I'$ is a subset of $I$ and $L'\leq L$ is the full subcomplex spanned by $I'$.  For each $i\in I-I'$ choose a base point $b_i$ in $B(i)$ and let $\boldb_{I-I'}$ be the corresponding point in $\prod_{i\in I-I'} B(i)$.  The inclusion $L'\hookrightarrow L$ induces an inclusion $i:Z_{L'}(\bA,\bB)\hookrightarrow \zl (\bA,\bB)$ by taking the product with $\boldb_{I-I'}$.

\begin{lemma}\label{l:retract}\textup{(cf.\ \cite[Lemma 2.2.3]{densuc}).}
Suppose, as above, that $L'$ is a full subcomplex of $L$.  Then $\zl(\bA,\bB)$ retracts onto $Z_{L'}(\bA,\bB)$.
\end{lemma}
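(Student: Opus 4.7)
The plan is to construct the retraction $r\colon \zl(\bA,\bB)\to Z_{L'}(\bA,\bB)$ coordinatewise. Namely, for $\bx=(x_i)_{i\in I}\in \zl(\bA,\bB)$, define
\[
r(\bx)_i \;=\; \begin{cases} x_i & \text{if } i\in I',\\ b_i & \text{if } i\in I-I'. \end{cases}
\]
Since each projection $\prod_{i\in I}A(i)\to \prod_{i\in I'}A(i)$ is continuous and the inclusion of the constant point $\boldb_{I-I'}$ into $\prod_{i\in I-I'}B(i)\subset \prod_{i\in I-I'}A(i)$ is continuous, the map $r$ is continuous as a map into $\prod_{i\in I}A(i)$. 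It is immediate from the definition that $r\circ \iota=\mathrm{id}$ on $Z_{L'}(\bA,\bB)$, where $\iota$ is the inclusion described just before the lemma.

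The one nontrivial point, and the only place the fullness hypothesis is used, is to check that $r$ actually lands in $\zl(\bA,\bB)$ (and in fact in $Z_{L'}(\bA,\bB)$). Observe that $b_i\in B(i)$, so $i\notin \supp(r(\bx))$ for every $i\in I-I'$; hence
\[
\supp(r(\bx)) \;\subseteq\; \supp(\bx)\cap I'.
\]
Since $\bx\in \zl(\bA,\bB)$, the set $\supp(\bx)$ spans a simplex of $L$, and so does its subset $\supp(r(\bx))$. But every vertex of $\supp(r(\bx))$ lies in $I'$, so this simplex of $L$ has all its vertices in $I'$. Because $L'$ is the \emph{full} subcomplex of $L$ spanned by $I'$, any such simplex of $L$ already belongs to $\cs(L')$. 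Thus $\supp(r(\bx))\in\cs(L')$ and $r(\bx)\in Z_{L'}(\bA,\bB)$, as required.

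The main (and only) obstacle is precisely this well-definedness step: without fullness of $L'$, a simplex of $L$ with all vertices in $I'$ need not be a simplex of $L'$, and the formula for $r$ could fail to produce a point of $Z_{L'}(\bA,\bB)$. Fullness of $L'$ makes the combinatorial step trivial and lets the topological retraction be built in the obvious coordinatewise way.
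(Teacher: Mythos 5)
Your proof is correct and is essentially the paper's proof: the paper's one-line argument also defines $r$ as the map induced by the projection $\prod_{i\in I}A(i)\to\prod_{i\in I'}A(i)$ (composed with inserting the base points $\boldb_{I-I'}$), and you have simply made explicit the verification, including the point where fullness of $L'$ is used to see that $\supp(r(\bx))\in\cs(L')$.
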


\begin{proof}
The retraction $r:\zl(\bA,\bB)\to Z_{L'}(\bA,\bB)$ is induced by the projection $\prod_{i\in I} A(i) \to \prod_{i\in I'} A(i)$.
\end{proof}

\subsection{Right-angled buildings}
Given a discrete space  $E$, let $\cone(E)$ be the \emph{cone on $E$}, i.e., $\cone(E)$ is the space formed from $E\times [0,1]$ by identifying all points with second coordinate $0$.  Identify $E$ with the subspace $E\times 1$.  For example, $\cone(S^0)=D^1$. 

\begin{example}\label{ex:rank1.2} (\emph{Realizations of products of rank 1 buildings}).
As in Example~\ref{ex:rank1.1}, suppose $\{E_i\}_{i\in I}$ is a family of rank one buildings (i.e., each $E_i$ is a discrete set with more than one element).  Let $\cac=\prod_{i\in I}E_i$ be the product building.  The associated Coxeter group is $(\boldC_2)^I$ and the Davis chamber is the cube, $[0,1]^I$.  The standard realization of $\cac$ is 
\[
\cu(\cac,[0,1]^I)=\prod_{i\in I} \cone(E_i)
\]
(cf.\ \S\ref{ss:basicconst}).   The spherical realization of $\cac$ is the join of the $E_i$ (cf.\ Examples~\ref{ex:flags}\,(1)). 
\end{example}

Here is an important  generalization of Example~\ref{ex:com}.

\begin{example}\label{ex:polyrank} (\emph{Polyhedral products of rank 1 buildings}).  
Take notation as in the previous example.  Let $L$ be a simplicial complex with vertex set $I$.  Put $\Econe :=\{(\cone(E_i),E_i)\}_{i\in I}$. Then the polyhedral product of the $(\cone(E_i),E_i)$ can be identified with the $K(L)$-realization of the product building $\cac=\prod_{i\in I} E_i$, i.e.,
	\begin{equation}\label{e:zlcu}
	\cu(\cac,K(L))=\zl\Econe .
	\end{equation}
\end{example}

\begin{example}\label{ex:univ}(\emph{Universal covers}).  
We note that if the $1$-skeleton of $L$ is not a complete graph (and each $E_i$ has at least two elements), then $\zl\Econe$ will not be simply connected.  It is proved in \cite{d09} that the universal cover $\wt{\cu}$ of  $\cu(\cac,K(L))$ is the $K(L)$-realization of a building,  $\wcac$. (As a set, $\wcac$ can be identified with the inverse image in $\wt{\cu}$ of the central vertex of $K(L)$.)  The type of this building is $(W_{L^1},S)$, the $\racs$ associated to the $1$-skeleton of $L$.
\end{example}

\begin{lemma}\label{l:pilone}
The fundamental group of  $\zl\Econe$ depends only on the $1$-skeleton of $L$, i.e., 
\[
\pi_1(\zl\Econe)=\pi_1(\zlone\Econe).
\]
\end{lemma}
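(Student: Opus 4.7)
The plan is to induct on the number of simplices of $L$ of dimension $\ge 2$, building $L$ up from $L^1$ one simplex at a time. I pick a filtration
\[ L^1 = L_0 \subset L_1 \subset \cdots \subset L_N = L, \]
in which each $L_{k+1} = L_k \cup \{\sigma_k\}$ for a single simplex $\sigma_k$ of dimension $\ge 2$; by ordering the $\sigma_k$ by non-decreasing dimension I arrange that $\partial\sigma_k \subset L_k$. It then suffices to show that at every step the inclusion $\cz_{L_k}\Econe \hookrightarrow \cz_{L_{k+1}}\Econe$ induces an isomorphism on $\pi_1$ (with respect to any base point in $\zlone\Econe$, which is path-connected since each $\cone(E_i)$ is).

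Unwinding \eqref{e:polyprod}, one has
\[
\cz_{L_{k+1}}\Econe \;=\; \cz_{L_k}\Econe \,\cup\, \cz_{\sigma_k}\Econe,
\]
and $\cz_{\sigma_k}\Econe = \prod_{i\in\sigma_k}\cone(E_i)\times\prod_{i\notin\sigma_k}E_i$ decomposes as a disjoint union, indexed by $p\in\prod_{i\notin\sigma_k}E_i$, of the contractible pieces $V_p := \prod_{i\in\sigma_k}\cone(E_i)\times\{p\}$. Since the $V_p$ are pairwise disjoint, I attach them to $\cz_{L_k}\Econe$ one at a time. The intersection $A_p := V_p\cap \cz_{L_k}\Econe$ consists of those $(x_i,p)\in V_p$ whose support is a proper subset of $\sigma_k$, i.e., for which some coordinate $x_i$ lies in $E_i\subset\cone(E_i)$.

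The key step is to identify $A_p$ with the iterated join of the $E_i$ for $i\in\sigma_k$. Writing a point of $\cone(E_i)=(E_i\times[0,1])/(E_i\times\{0\})$ as $[e_i,t_i]$ with $t_i\in[0,1]$ (so that $t_i=1$ gives a point of $E_i$ and $t_i=0$ the cone point), the rule
\[
(t_i,e_i)_{i\in\sigma_k}\;\longmapsto\;(s_i,e_i)_{i\in\sigma_k},\qquad s_i := t_i \Big/ \sum\nolimits_{j\in\sigma_k} t_j,
\]
is well defined on $A_p$ (the denominator is positive since $\max_j t_j = 1$) and gives a homeomorphism of $A_p$ onto the join of the $E_i$ for $i\in\sigma_k$, with inverse $t_i := s_i/\max_j s_j$. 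Equivalently, $A_p$ is the spherical realization $\cu(\prod_{i\in\sigma_k}E_i,\gD)$ of Example~\ref{ex:rank1.2}. Because $|\sigma_k|\ge 3$ and each $|E_i|\ge 2$, iterated use of the identity $\operatorname{conn}(X * Y)=\operatorname{conn}(X)+\operatorname{conn}(Y)+2$ shows this join is $(|\sigma_k|-2)$-connected, in particular path-connected and simply connected.

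A standard Van Kampen argument now closes the proof: $V_p$ is contractible, $A_p$ is simply connected, and $V_p$ meets $\cz_{L_k}\Econe\cup\bigsqcup_{p'\ne p}V_{p'}$ only in $A_p$, so attaching $V_p$ leaves $\pi_1$ unchanged. Iterating over all components $V_p$ and then over all steps of the filtration yields $\pi_1(\zlone\Econe)=\pi_1(\zl\Econe)$. The main obstacle is the identification of the ``fat boundary'' $A_p$ with a highly connected iterated join; once that is in place, the rest is routine bookkeeping.
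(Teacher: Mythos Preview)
Your proof is correct and follows essentially the same approach as the paper's: one passes from $\zlone\Econe$ to $\zl\Econe$ by successively adjoining, for each simplex $\gs$ of dimension $\ge 2$, pieces which are cones on the simply connected join $\ast_{i\in\gs}E_i$, so $\pi_1$ is unchanged. The paper states this tersely (illustrating only with a $2$-simplex and invoking $\cz_{\partial\gD}\Econe\cong E_1*E_2*E_3$), whereas you spell out the filtration, the decomposition of $\cz_{\sigma_k}\Econe$ into the contractible components $V_p$ indexed by $\prod_{i\notin\sigma_k}E_i$, and the explicit homeomorphism of $A_p$ with the join.
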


\begin{proof}
First consider the case where $L$ is the simplex $\gD^n$ on $\{0,1,\dots n\}$, with $n\ge2$. Then $Z_\gD\Econe=\cone E_0\times \cdots \times \cone E_n$, while $Z_{\partial \gD}\Econe$ is the join $E_0*\cdots *E_n$, which is simply connected (since $n\ge 2$).  So, $Z_{\gD^n}\Econe$ is the cone on a simply connected space.  It follows that $\zl\Econe$ can be constructed from $\zlone\Econe$ by successively adjoining cones on simply connected spaces.  So, the fundamental group does not change.

Alternatively, this can be proved by using \eqref{e:zlcu} and the arguments in \cite[Ch.\ 8, 9]{dbook}.
\end{proof}

Let $\wzl\Econe$ denote the universal cover of $\wzl\Econe$.  As a consequence of \eqref{e:zlcu} we have the following.

\begin{proposition}\label{p:RABcone}\textup{(cf.~\cite{d09}).}
Suppose $L$ is the flag complex of a simplicial graph $L^1$.  With notation as above, the standard realization of $\wcac$ is given by
\[
\cu(\wcac,K(L))=\wcu(\cac,K(L))=\wzl\Econe.
\]
\end{proposition}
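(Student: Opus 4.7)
The plan is to deduce the second equality from \eqref{e:zlcu} and to reduce the first equality to the construction of the building $\wcac$ given in \cite{d09} and summarized in Example~\ref{ex:univ}. Applying the universal covering functor to $\cu(\cac,K(L))=\zl\Econe$ immediately yields the second equality $\wcu(\cac,K(L))=\wzl\Econe$; this step uses nothing about $L$ beyond \eqref{e:zlcu}.

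For the first equality I would construct a natural comparison map $\pi:\cu(\wcac,K(L))\to \cu(\cac,K(L))$ and then show it is a universal covering. The set $\wcac$ is, by construction, the set of path components of the preimage in $\wcu(\cac,K(L))$ of the cone point of $K(L)$. Projecting each such component to the chamber of $\cac$ that it lies above gives a canonical chamber-system morphism $\wcac\to\cac$ that carries $J$-residues to $J$-residues for every $J\in \cs(L)$. Hence it respects the equivalence relation $\sim$ defining the basic construction \eqref{e:basic}, and descends to the desired map $\pi$. A local check at an interior point of each face $K(L)_J$, where a neighborhood is controlled by the $J$-residue through the relevant chamber, then shows that $\pi$ is a covering map.

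The final step is to verify that $\cu(\wcac,K(L))$ is simply connected, and this is where the flag-complex hypothesis is essential. Because $L$ is flag, it coincides with the nerve $L(W_{L^1},S)$ of the $\racs$ $(W_{L^1},S)$ attached to $L^1$, so $K(L)$ is the Davis chamber $K(W_{L^1},S)$. Under these circumstances the standard realization of any building of type $(W_{L^1},S)$, endowed with its piecewise-Euclidean cubical metric, is CAT(0) by Gromov's link condition --- the flagness of $L$ providing precisely the needed combinatorial input at each vertex link --- and hence contractible (cf.\ \cite[Ch.~12, 18]{dbook}).

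I expect the CAT(0) step to be the main obstacle, since it requires the Davis--Moussong machinery applied to buildings rather than just to Coxeter groups alone. Once that ingredient is granted, the three steps combine: $\pi$ is a covering of $\cu(\cac,K(L))$ by a simply connected space, so it must be the universal covering, which identifies $\cu(\wcac,K(L))$ with $\wcu(\cac,K(L))$ and completes the first equality.
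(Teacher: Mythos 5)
Your architecture (build a comparison map $\pi:\cu(\wcac,K(L))\to\cu(\cac,K(L))$, show it is a covering, show the total space is simply connected, conclude by uniqueness of universal covers) can be made to work, but the step establishing simple connectivity is circular as written. Gromov's link condition yields only that the cubical complex $\cu(\wcac,K(L))$ is \emph{locally} $\cat(0)$; to upgrade this to $\cat(0)$ --- hence contractible --- via the Cartan--Hadamard theorem you must already know the space is simply connected, which is exactly what this step is supposed to deliver. So ``CAT(0) by Gromov's link condition, hence contractible'' cannot stand on its own. The repair is to cite the actual theorem that the standard realization of any building is $\cat(0)$ (\cite[Thm.~11.1]{dbuild}, exactly as the paper does later in the proof of Lemma~\ref{l:wzl}); the proof there establishes contractibility by building-theoretic means (retractions onto apartments, which are Davis complexes) and does not presuppose simple connectivity. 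Alternatively one can prove simple connectivity of $\cu(\wcac,K(L))$ directly from the building axioms by pushing loops into galleries and using elementary homotopies of words. You correctly identified the CAT(0) step as the crux, but the specific justification you offered is the one that does not close.

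There is also a smaller glossed point in the covering claim: it is not enough that $\wcac\to\cac$ carries residues to residues; for $\pi$ to be a local homeomorphism near a point of the face $K(L)_J$ you need that for each $J\in\cs(L)$ the map restricts to a \emph{bijection} from each $J$-residue of $\wcac$ onto the corresponding $J$-residue of $\cac$ (otherwise $\pi$ could be branched). That bijectivity is part of what \cite{d09} proves about its construction --- and once you extract that much from \cite{d09}, note that its theorem, as quoted in Example~\ref{ex:univ}, already asserts that the universal cover of $\cu(\cac,K(L))$ \emph{is} the $K(L)$-realization $\cu(\wcac,K(L))$. The paper's own argument is precisely this citation combined with \eqref{e:zlcu}; the flagness of $L$ enters only to identify $K(L)$ with the Davis chamber $K(W_{L^1},S)$, i.e., to justify calling the result the \emph{standard} realization. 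Your route is thus a genuinely more self-contained re-derivation of the \cite{d09} statement, and with the \cite{dbuild} substitution it becomes correct; as written, however, it simultaneously under-proves the covering property and over-claims what Gromov's lemma gives.
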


\begin{lemma}\label{l:wzl}
With notation as above, $\wzl\Econe$ is contractible if and only if $L$ is a flag complex.
\end{lemma}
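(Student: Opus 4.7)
My plan is to prove both implications separately. For the \emph{if} direction, suppose $L$ is a flag complex. Proposition~\ref{p:RABcone} identifies $\wzl\Econe$ with the standard realization $\cu(\wcac, K(L))$ of the right-angled building $\wcac$ of type $(W_{L^1}, S)$. The result then follows from the theorem (proved by Davis, see \cite{dbook}) that the standard realization of any building admits a piecewise Euclidean $\cat(0)$ metric, hence is contractible.

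For the \emph{only if} direction, assume $L$ is not flag. Choose a subset $\sigma \subseteq I$ of minimal cardinality such that every two elements of $\sigma$ bound an edge of $L$ yet $\sigma \notin \cs(L)$. Since the $1$-skeleton of $L$ is contained in $L$, we must have $k := |\sigma| \geq 3$. By minimality of $\sigma$, every proper subset lies in $\cs(L)$, so the full subcomplex of $L$ on vertex set $\sigma$ is precisely the boundary $\partial \sigma$ of the $(k-1)$-simplex on $\sigma$. Lemma~\ref{l:retract} then supplies a retraction $r : \zl\Econe \to Z_{\partial \sigma}\Econe$, where the codomain is the polyhedral product indexed by $\sigma$ alone.

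Next I identify the homotopy type of $Z_{\partial \sigma}\Econe$: by definition it is the subspace of $\prod_{i \in \sigma} \cone(E_i)$ consisting of tuples with at least one coordinate lying in the base $E_i$ of its cone factor. An inductive argument (using the standard decomposition $X * Y = CX \times Y \cup X \times CY$) shows this ``boundary of a product of cones'' is homotopy equivalent to the iterated join $E_{i_1} * \cdots * E_{i_k}$. Since each $E_i$ is discrete with at least two points, a K\"unneth-type calculation for joins yields that this join is $(k-2)$-connected with nontrivial free abelian $(k-1)$-st homology of rank $\prod(|E_i|-1)$. By Hurewicz, $\pi_{k-1}(Z_{\partial \sigma}\Econe) \neq 0$. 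Because $k - 1 \geq 2$, choices of basepoint are immaterial, and the retraction induces an injection $\pi_{k-1}(Z_{\partial \sigma}\Econe) \hookrightarrow \pi_{k-1}(\zl\Econe)$. Since the covering map $\wzl\Econe \to \zl\Econe$ induces isomorphisms on $\pi_n$ for $n \geq 2$, we conclude $\pi_{k-1}(\wzl\Econe) \neq 0$, so $\wzl\Econe$ is not contractible.

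The main obstacle is the identification of $Z_{\partial \sigma}\Econe$ with the join $E_{i_1} * \cdots * E_{i_k}$ and the verification that this join is sufficiently highly connected while retaining nontrivial $(k-1)$-st homotopy; this is where the combinatorial content lives. The flag direction reduces essentially to Davis's $\cat(0)$ theorem for buildings via Proposition~\ref{p:RABcone}.
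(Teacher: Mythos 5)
Your proof is correct, and while the \emph{if} direction coincides with the paper's, the \emph{only if} direction takes a genuinely different route. For the \emph{if} direction the paper argues exactly as you do: identify $\wzl\Econe$ with the standard realization $\cu(\wcac,K(L))$ via Proposition~\ref{p:RABcone} and invoke the theorem that standard realizations of buildings are $\cat(0)$ (the paper cites \cite[Thm~11.1]{dbuild}, and also sketches an alternative direct check, via Gromov's Lemma, that vertex links in the cubical complex are flag). For the converse the paper argues through apartments: if $L$ is not flag, the $K(L)$-realization of an apartment, $\cu(W_{L^1},K(L))$, is not contractible by \cite[Thm.~9.1.4]{dbook}, and since the building's realization retracts onto any apartment it cannot be contractible either. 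You instead stay entirely inside the polyhedral-product framework: choose a minimal nonface $\sigma$ with complete $1$-skeleton (necessarily $k=|\sigma|\ge 3$, and by minimality the full subcomplex on $\sigma$ is $\partial\sigma$), retract onto $\cz_{\partial\sigma}\Econe$ by Lemma~\ref{l:retract}, identify this space with the join $E_{i_1}\ast\cdots\ast E_{i_k}$ --- a wedge of $(k-1)$-spheres since each $E_i$ is discrete with at least two points --- and conclude $\pi_{k-1}\neq 0$ by Hurewicz, passing to the universal cover since $k-1\ge 2$. This is in fact the same device the paper deploys later, inside the proof of Theorem~\ref{t:asphericalflag}, for the subcomplex $L''\cong\partial\gD$ and the spherical realization of the product building $\cac''$, so your argument is consonant with the paper's toolkit; its advantage is that it is more elementary and self-contained, avoiding both the apartment-retraction fact and the black box of \cite[Thm.~9.1.4]{dbook}, whereas the paper's route yields the slightly stronger conclusion of non-acyclicity (used elsewhere, e.g., in Examples~\ref{ex:corman}\,(4)). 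Two cosmetic remarks: it is the inclusion (split by the retraction) that injects on $\pi_{k-1}$, and when some $E_i$ is infinite the rank $\prod(|E_i|-1)$ should be read as infinite; neither affects the argument, since only nontriviality of $H_{k-1}$ is needed.
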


\begin{proof}
Let $(W,S)$ be the $\racs$ associated to $L^1$ and let $\wcac$ be the $\rab$ of type $(W,S)$ defined in the Example~\ref{ex:univ}.  By Examples~\ref{ex:polyrank} and \ref{ex:univ}, 
$\wzl\Econe=\cu(\wcac, K(L))$, the $K(L)$-realization of $\wcac$.  But the $K(L)$ realization of any $\rab$ is contractible if and only if $L$ is a flag complex.  (If $L$ is a flag complex, then the $K(L)$ realization of a $\rab$ is its standard realization which is $\cat (0)$, by \cite[Thm~11.1]{dbuild}, hence, contractible. A slightly different argument showing that  the cubical complex $\wzl\Econe$ is $\cat (0)$ goes as follows:  the link of any cubical face is a  flag complex since any such link is the join of a link in $L$ with a multiple join of discrete sets.)  If $L$ is not flag, then any $K(L)$ realization of an apartment is not contractible (\cite[Thm.\ 9.1.4, p.\ 167]{dbook}) and hence, the same is true for the building since it retracts onto any apartment.
\end{proof}

\subsection{Graph products of groups} 
Suppose we are  given a simplicial graph $L^1$ with vertex set $I$
and a family of discrete groups $\bG=\{G_i\}_{i\in I}$.

\begin{definition}\label{d:gprt}
The \emph{graph product} of the $G_i$ is the group $\gG$  formed by first taking  the free product of the $G_i$ and then taking the quotient by the normal subgroup generated by all commutators of the form $[g_i,g_j]$ where $\{i,j\}\in \edge(L^1)$, $g_i\in G_i$ and $g_j\in G_j$.
\end{definition}

\begin{examples}\label{ex:racg}\hfil

(1) If the graph $L^1$ has no edges, then $\gG$ is the free product of the $G_i$.

(2) If $L^1$ is the complete graph, then $\gG$ is the direct sum $\prod_{i\in I} G_i$ (i.e., when $I$ is finite it is the direct product).
 
(3) If all $G_i=\boldC_2$, then $\gG$ is the $\racg$ determined by $L^1$.

(4) If all $G_i=\zz$, then $\gG$ is the \emph{right-angled Artin group} (or $\raag$) determined by $L^1$. 
\end{examples}

\paragraph{Relative graph products.} Suppose that for each $i$ we are given a $G_i$-set $E_i$.  The group $G_i$ need not be effective on $E_i$.  Let $N_i$ be the normal subgroup which acts trivially.  
Put $(\bG,\bE):=\{(G_i,E_i)\}_{i\in I}$ and $\Econe:=\{(\cone E_i, E_i)\}_{i\in I}$.  As in Example~\ref{ex:polyrank}, we have the polyhedral product $\zlone\Econe$.   Let 
$\wzlone\Econe$ be the universal cover of the space $\zlone\Econe$.  The group $G:=\prod_{i\in I} G_i$ acts on  $\zlone\Econe$ and the kernel of the action is the normal subgroup $N:=\prod_{i\in I} N_i$.  Let $\gG_L$ be the group of all lifts of the $G/N$-action to $\wzl\Econe$.  In other words, there is a short exact sequence of groups,
\begin{equation*}
1\to \pi_1(\zl\Econe ) \to \gG_L \to  G/N \to 1.
\end{equation*}

Using Lemma~\ref{l:pilone}, we deduce the following.

\begin{lemma}\label{l:pilone2}
The group $\gG_L$ of all lifts of the $G/N$-action to $\wzl\Econe$ depends only on the $1$-skeleton of $L$, i.e.,
\(
\gG_L=\gG_{L^1}
\).
\end{lemma}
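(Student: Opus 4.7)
The plan is to combine the short exact sequence displayed just before the lemma with Lemma~\ref{l:pilone} via the five lemma.

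First I would note that the inclusion of the full subcomplex $j: \zlone\Econe \hookrightarrow \zl\Econe$ is $G$-equivariant, since the $G$-action is defined coordinate-wise on $\prod_i \cone(E_i)$ and preserves the polyhedral product of every subcomplex; in particular, the two actions have the same kernel $N$. By Lemma~\ref{l:pilone} the induced map $j_*: \pi_1(\zlone\Econe)\to\pi_1(\zl\Econe)$ is an isomorphism. Letting $p_L: \wzl\Econe \to \zl\Econe$ denote the universal covering, it then follows that $p_L^{-1}(\zlone\Econe)$ is path connected (by surjectivity of $j_*$) and simply connected (by injectivity), so it may be canonically identified with the universal cover $\wzlone\Econe$ of $\zlone\Econe$ sitting as a subspace of $\wzl\Econe$.

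Given this identification, any lift $\tilde g$ of an element $g\in G/N$ acting on $\zl\Econe$ preserves the $G$-invariant subset $p_L^{-1}(\zlone\Econe)$, and its restriction is a lift of the $g$-action on $\zlone\Econe$. This yields a restriction homomorphism $\rho: \gG_L\to\gG_{L^1}$ fitting into a commutative diagram of short exact sequences
\[
\begin{array}{ccccccccc}
1 & \to & \pi_1(\zl\Econe) & \to & \gG_L & \to & G/N & \to & 1 \\
& & \downarrow & & \downarrow\rho & & \Vert & & \\
1 & \to & \pi_1(\zlone\Econe) & \to & \gG_{L^1} & \to & G/N & \to & 1
\end{array}
\]
in which the left vertical arrow is restriction of deck transformations. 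A basepoint-lifting chase shows this left arrow is a one-sided inverse of $j_*$, and hence, as $j_*$ is itself an isomorphism, a two-sided inverse.

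The five lemma then forces $\rho$ to be an isomorphism, giving $\gG_L\cong\gG_{L^1}$. I do not anticipate any serious obstacle; the substance has already been packaged into Lemma~\ref{l:pilone}, and the rest is a formal diagram chase. The one point requiring mild care is verifying that the identification $\wzlone\Econe = p_L^{-1}(\zlone\Econe)$ is compatible with the two $G$-actions used to assemble the extensions, which follows from $G$-equivariance of $j$ together with uniqueness of lifts once a basepoint has been chosen.
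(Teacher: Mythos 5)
Your proof is correct and matches the paper's intent: the paper gives no argument beyond the phrase ``Using Lemma~\ref{l:pilone}, we deduce the following,'' and your write-up is precisely the expected expansion of that deduction --- identifying $\wzlone\Econe$ with $p_L^{-1}(\zlone\Econe)$ via the isomorphism $j_*$, restricting lifts, and applying the five lemma to the two extensions of $G/N$. No gaps; the covering-space identification and the equality of the kernels $N$ on the two spaces are exactly the points that needed checking, and you check them.
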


The group $\gG_{L^1}$ is almost the correct definition of  the ``graph product of the $G_i$ relative to the $E_i$.''  To make the definition correct we must put back in the subgroup $N$.  Thus, the \emph{graph product of the $G_i$ relative to the $E_i$}, is the group $\zlone(\bG,\bE)$, defined the extension of $\gG_{L^1}$ by $N$ given by the pullback diagram:
	\begin{equation*}
	\begin{CD}
	\zlone(\bG,\bE) @>>> \gG_{L^1}\\
	@VVV	@VVV\\
	G @>>> G/N
	\end{CD}
	\end{equation*} 

\begin{remark}
If each $G_i$ acts simply transitively on $E_i$, then $E_i\cong G_i$, $\zlone(\bG,\bE)=\gG_{L^1}$ and this group can be identified with the previously defined graph product.  If each $G_i$ is only required to be transitive, then $E_i\cong G_i/H_i$ and $\zlone(\bG,\bE)$ coincides with the graph product of the pairs $\{G_i,H_i)\}$ as defined by Januszkiewicz-\'Swi\c atkowski in \cite{js}. Discussions of ``graph products of group actions,'' along the above lines, also can be found in \cite[\S4.2]{hagdg} and \cite[\S1.4]{do10}.
\end{remark}

\begin{example}\label{ex:trivial}
Even when all the groups are trivial, the relative graph product need not be the trivial.  Indeed, suppose each $G_i$  is the trivial group and each $E_i$ is $S^0$.  Then $\zlone (\bG,\bE)=\pi_1(\zlone(D^1,S^0))$ (cf.\  Example~\ref{ex:com}).
\end{example}

\subsection{The fundamental group of a polyhedral product}\label{ss:pi1}

As before, $\ab=\{(A(i),B(i))\}_{i\in I}$ is a family of pairs of CW complexes and $L$ is a simplicial complex with vertex set $I$.   
We assume  each $A(i)$ is path connected.

Let $G_i=\pi_1(A(i))$ and let $p_i:\wa(i)\to A(i)$ be the universal cover.  Let $E_i$ denote the set of path components of $\wb(i)$, where $\wb(i):=p_i^\minus(B(i))$.  So, $E_i$  is naturally a $G_i$-set.  The collection $(\bG,\bE):=\{(G_i,E_i)\}_{i\in I}$ is the \emph{fundamental group data} associated to $\ab$.  The natural map $\wb(i)\to E_i$ which collapses each component to a point extends to a $G_i$-equivariant map $q_i:\wa(i)\to \cone E_i$.  Taking products we get a map $q:\zl \wab \to \zl\Econe$.  The universal cover $\wzl\wab\to \zl \wab$ is induced from pulling back the universal cover of $\zl\Econe$ via $q$. Using this observation together with Lemma~\ref{l:pilone}, we get the following.

\begin{lemma}\label{l:1skel}
The fundamental group of the polyhedral product depends only on the $1$-skeleton of $L$, i.e., $\pi_1(\zl\ab)=\pi_1(\zlone\ab)$.  
\end{lemma}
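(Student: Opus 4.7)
The plan is to apply the universal-cover pullback identification stated just before the lemma to both $L$ and $L^1$, combine this with Lemma~\ref{l:pilone}, and conclude by the Five Lemma.

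First I would show that the product covering $\prod_{i\in I} p_i:\prod_{i\in I}\wa(i)\to\prod_{i\in I} A(i)$ restricts to a principal $\prod_{i\in I} G_i$-cover $\zl\wab\to\zl\ab$. Freeness of the $\prod G_i$-action on $\zl\wab$ is coordinate-wise, and path-connectedness of $\zl\wab$ follows by moving one coordinate at a time through $\wa(i)$, so that at each instant the support is contained in the single vertex $\{i\}\in\cs(L)$. This yields a short exact sequence
\[
1\to\pi_1(\zl\wab)\to\pi_1(\zl\ab)\to\prod_{i\in I}G_i\to 1,
\]
and analogously with $L^1$ in place of $L$. The two sequences are natural with respect to the inclusion $\zlone\ab\hookrightarrow\zl\ab$, with the rightmost term identical on both sides.

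Next, the observation preceding the lemma states $\wzl\wab=q^*\wzl\Econe$, which is equivalent to $q_\ast:\pi_1(\zl\wab)\to\pi_1(\zl\Econe)$ being an isomorphism; the same reasoning applied to $L^1$ gives that $(q_1)_\ast$ is an isomorphism. Because each $q_i$ is defined independently of $L$, these identifications fit into a commuting square with the inclusion induced by $L^1\hookrightarrow L$. Combining with Lemma~\ref{l:pilone}, which asserts that $\zlone\Econe\hookrightarrow\zl\Econe$ is a $\pi_1$-isomorphism, I deduce that $\pi_1(\zlone\wab)\to\pi_1(\zl\wab)$ is an isomorphism.

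Finally, applying the Five Lemma to the comparison of the two short exact sequences above — the left vertical arrow an isomorphism by the previous step and the right one the identity on $\prod G_i$ — yields $\pi_1(\zlone\ab)\cong\pi_1(\zl\ab)$, as desired. The main obstacle I foresee is the naturality bookkeeping: one must verify that the universal covers, the maps $q$ and $q_1$, and the two cover-theoretic short exact sequences all interact coherently with the inclusions induced by $L^1\hookrightarrow L$. This requires careful tracking of basepoints but is otherwise routine.
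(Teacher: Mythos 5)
Your proposal is correct and takes essentially the same route as the paper: the paper's entire proof consists of the remark preceding the lemma (that the universal cover of $\zl\wab$ is pulled back from that of $\zl\Econe$ via $q$) together with Lemma~\ref{l:pilone}, and your short-exact-sequence plus Five Lemma argument is just the natural way of making that one-line deduction explicit. One harmless slip: in your connectivity argument for $\zl\wab$, during the initial moves (shrinking the coordinates in a support $J$ one at a time into the $\wb(i)$'s) the support is the simplex $J\in\cs(L)$ rather than a single vertex, but since supports never leave $\cs(L)$ the conclusion stands.
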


\begin{theorem}\label{t:pi1}
Take hypotheses as above and let $(\bG,\bE)$ be the fundamental group data associated to $\ab$.   Then the fundamental group of the polyhedral product is the graph product of the $G_i$ relative to the $E_i$, i.e., 
\[
\pi_1(\zl\ab)=\zlone (\bG,\bE).
\] 
\end{theorem}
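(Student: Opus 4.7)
My plan is to identify $\zlone(\bG,\bE)$ with the full deck group of the universal cover $\wzl\wab\to\zl\ab$ by exhibiting a free action of $\zlone(\bG,\bE)$ on $\wzl\wab$ with quotient $\zl\ab$. By Lemma~\ref{l:1skel} and Lemma~\ref{l:pilone2} both sides depend only on $L^1$, so I assume $L=L^1$. The product of universal coverings $\prod_i\wa(i)\to\prod_i A(i)$ restricts to a regular covering $\zl\wab\to\zl\ab$ with free deck group $G:=\prod_i G_i$, because $\zl\wab$ is exactly the preimage of $\zl\ab$. Combined with the identification noted just before the theorem, namely that the pullback $\wzl\wab=\zl\wab\times_{\zl\Econe}\wzl\Econe$ is the universal cover of $\zl\wab$, this exhibits $\wzl\wab\to\zl\ab$ as the universal cover of $\zl\ab$.

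Writing $\zlone(\bG,\bE)=G\times_{G/N}\gG$ as in the defining pullback diagram, and letting $\rho:\gG\to G/N$ denote the surjection in the short exact sequence defining $\gG$, I define an action on $\wzl\wab$ by
\[
(g,\tilde\gamma)\cdot(y,\tilde z):=(g\cdot y,\ \tilde\gamma\cdot \tilde z).
\]
Well-definedness follows from three ingredients: the compatibility $g\bmod N=\rho(\tilde\gamma)$ built into $\zlone(\bG,\bE)$, the $G$-equivariance of $q:\zl\wab\to\zl\Econe$ (so that $q(gy)=(g\bmod N)\,q(y)$), and the fact that $\tilde\gamma\in\gG$ covers $\rho(\tilde\gamma)$ on $\zl\Econe$; these combine to show that $q(gy)$ and the image of $\tilde\gamma\tilde z$ under $\wzl\Econe\to\zl\Econe$ agree, so $(gy,\tilde\gamma\tilde z)\in\wzl\wab$. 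Freeness: if $(g,\tilde\gamma)$ fixes $(y,\tilde z)$, then the free $G$-action on $\zl\wab$ forces $g=e$, so $\tilde\gamma\in\ker\rho=\pi_1(\zlone\Econe)$, and this kernel acts freely on $\wzl\Econe$ as deck transformations, forcing $\tilde\gamma=e$.

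The action is visibly by deck transformations of $\wzl\wab\to\zl\ab$, so $\zlone(\bG,\bE)$ injects into $\pi_1(\zl\ab)$. Transitivity on each fiber of $\wzl\wab\to\zl\ab$ completes the identification: given $(y_1,\tilde z_1),(y_2,\tilde z_2)$ in one fiber, the free transitive $G$-action on the corresponding fiber of $\zl\wab\to\zl\ab$ supplies a unique $g\in G$ with $gy_1=y_2$; any lift $\tilde\gamma_0\in\gG$ of $g\bmod N$ (one exists by surjectivity of $\rho$) carries $\tilde z_1$ into the fiber of $\wzl\Econe\to\zl\Econe$ through $\tilde z_2$, and correcting by the unique $h\in\pi_1(\zlone\Econe)$ with $h\tilde\gamma_0\tilde z_1=\tilde z_2$ produces $(g,h\tilde\gamma_0)\in\zlone(\bG,\bE)$ sending $(y_1,\tilde z_1)$ to $(y_2,\tilde z_2)$. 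The essential topological input --- the identification of $\wzl\wab$ as the universal cover of $\zl\wab$ --- has been handed to us in the paragraph preceding the theorem; past that, the proof is a formal pullback chase, the only mildly subtle step being this transitivity verification, where surjectivity of $\rho$ is used in an essential way.
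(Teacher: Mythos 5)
Your proof is correct and follows essentially the same route as the paper's: both arguments rest on the identification, stated just before the theorem, of $\wzl\wab$ as the pullback of $\wzl\Econe$ along $q$, and both identify $\pi_1(\zl\ab)$ with $\zlone(\bG,\bE)$ via the deck group of this universal cover of $\zl\ab$. The only difference is one of direction and detail --- the paper defines the homomorphism $\gf:\pi_1(\zl\ab)\to\zlone(\bG,\bE)$ from the deck action on $\wzl\Econe$ and cites the pullback property for bijectivity, whereas you construct the inverse explicitly by exhibiting the free, fiber-transitive $\zlone(\bG,\bE)$-action on the pullback, which amounts to filling in the verifications the paper leaves implicit.
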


\begin{proof}
Since $\pi_1(\zl\ab)$ is the group of deck transformations of $\wzl\wab \to \zl\ab$, it acts on $\wzl\Econe$ and hence, defines a homomorphism $\pi_1(\zl\ab)\to \gG$ which lifts to $\gf:\pi_1(\zl\ab)\to \zlone(\bG,\bE)$.  Since the universal cover $\zl\wab$ is the pullback of $\wzl\Econe$, it follows that $\gf$ is an isomorphism.
\end{proof}

\begin{example}\label{ex:realdj2} (\emph{Real Davis-Januszkiewicz space continued}).  This is a continuation of Example~\ref{ex:realdj}.  By \eqref{e:dj2}, $DJ^\bR (L)\sim\, \zl(B\boldC_2,*)$; so, $\pi_1(DJ^\bR(L))$ is the graph product of copies of $\boldC_2$ over $L^1$.  By Examples~\ref{ex:racg}\,(3),  this graph product is $W_{L^1}$, the $\racg$ associated to $L^1$.
\end{example}

\begin{example}\label{ex:gprt1} (\emph{Graph products of classifying spaces}). 
More generally, given $L^1$ and a family of groups $\bG=\{G_i\}_{i\in I}$, let $\ab=\{(BG_i,*)\}_{i\in I}$.  By Theorem~\ref{t:pi1},  $\pi_1(\zlone\ab)$ is the graph product of the $G_i$.
\end{example}

\subsection{When is a polyhedral product aspherical?}\label{ss:aspprod}
A vertex $v$ of a simplicial complex $L$ is a \emph{conelike} if it is joined by edges to all other vertices of $L$

\begin{definition}\label{d:apair}
A pair of spaces $(A,B)$ is an \emph{aspherical pair}  if 
\begin{itemize}
\item
$A$ is path connected and aspherical,
\item
each path component of $B$ is aspherical and the fundamental group of each such path component maps injectively into $\pi_1(A)$.
\end{itemize}
Given a pair $(A,B)$ with $A$ path connected, let $\wa$ denote the universal cover of $A$ and let $\wb$ be the inverse image of $B$ in $\wa$.
\end{definition}

\begin{theorem}\label{t:asphericalflag}
Suppose $I$ is the vertex set of a simpliciial complex $L$ and  $\ab=\{(A(i), B(i))\}_{i\in I}$ is a family of pairs.  Then the folllowing three conditions are necessary and sufficient for $\zl\ab$ to be aspherical.
\begin{enumeratei}
\item
Each $A(i)$ is aspherical.
\item
If a vertex $i$ is not conelike, then $(A(i), B(i))$ is an aspherical pair.
\item
$L$ is a flag complex.
\end{enumeratei}    
\end{theorem}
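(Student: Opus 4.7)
The plan is to treat necessity and sufficiency separately, relying on two central tools: Lemma~\ref{l:retract} (which gives $Z_{L'}(\bA|_{I'},\bB|_{I'})$ as a retract of $\zl\ab$ for any full subcomplex $L'$ on $I'\subset I$), and the description of the universal cover of $\zl\ab$ appearing in the proof of Theorem~\ref{t:pi1}, namely the pullback $\wzl\wab=\zl\wab\times_{\zl\Econe}\wzl\Econe$ formed via the coordinatewise map $q=(q_i)$ with $q_i:\wa(i)\to\cone E_i$; Lemma~\ref{l:wzl} then tells us $\wzl\Econe$ is contractible exactly when $L$ is a flag complex.

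For sufficiency I would argue by induction on the number of vertices of $L$. If $L$ has a conelike vertex $v$, then (iii) forces $L=\{v\}*\Lk(v)$, so $\zl\ab=A(v)\times Z_{\Lk(v)}(\ab|_{I\setminus\{v\}})$; the second factor still satisfies (i), (ii), (iii) and is aspherical by induction, while $A(v)$ is aspherical by (i), so the product is aspherical. Hence I may assume $L$ has no conelike vertices, so (ii) applies at every vertex. Then for each $i$ the map $q_i:(\wa(i),\wb(i))\to(\cone E_i,E_i)$ is a homotopy equivalence of pairs: both total spaces are contractible (by (i), and because any cone is contractible), and the restriction $\wb(i)\to E_i$ collapses each path component to a point, which is a homotopy equivalence because (ii) forces each such component to be contractible (the $\pi_1$-injectivity identifies each component of $\wb(i)$ with the universal cover of a path component of $B(i)$). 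By the standard functoriality of polyhedral products under pair equivalences, $q:\zl\wab\to\zl\Econe$ is a weak equivalence, so by pullback $\wzl\wab\simeq\wzl\Econe$. Combined with (iii) and Lemma~\ref{l:wzl}, this gives $\wzl\wab$ contractible, so $\zl\ab$ is aspherical.

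For necessity I would assume $\zl\ab$ aspherical and exploit that asphericity passes to retracts. Applying Lemma~\ref{l:retract} to $L'=\{i\}$ realizes $A(i)$ as a retract, giving (i). For (ii), if $i$ is non-conelike I pick $j\in I$ with $\{i,j\}\notin L$ and apply Lemma~\ref{l:retract} to the full subcomplex $\{i\}\sqcup\{j\}$; the retract is $A(i)\times B(j)\,\cup\,B(i)\times A(j)$, and its asphericity, in the presence of (i), forces each path component of $B(i)$ to be aspherical and $\pi_1$-injective into $\pi_1(A(i))$ by a van Kampen / covering-space analysis. For (iii), if $L$ is not flag I pick a minimal non-simplex $J\subset I$; the full subcomplex on $J$ is $\partial\gD_J$, so $Z_{\partial\gD_J}(\ab|_J)$ is a retract of $\zl\ab$. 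I would finish by identifying the universal cover of this retract with the $K(\partial\gD_J)$-realization of an appropriate right-angled building (via Example~\ref{ex:polyrank} and Theorem~\ref{t:pi1}), then invoking the ``only if'' direction of Lemma~\ref{l:wzl} to see it is not contractible, contradicting asphericity.

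The main obstacle I anticipate is the necessity of (iii): the retract $Z_{\partial\gD_J}(\ab|_J)$ typically has a complicated fundamental group, so extracting a nontrivial higher homotopy class requires carefully lifting to its universal cover and matching its non-contractibility to that of the $K(\partial\gD_J)$-realization of an $\rab$ under the non-flag hypothesis. A secondary, mostly technical difficulty is verifying functoriality of polyhedral products under pair homotopy equivalences in the sufficiency step, which requires each inclusion $B(i)\hookrightarrow A(i)$ to be a cofibration — a standing hypothesis I would fold into the definition of ``pair'' at the start.
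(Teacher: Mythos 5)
Your sufficiency argument and your necessity arguments for (i) and (ii) track the paper's proof closely: the paper likewise splits off the conelike vertices as a join factor (all at once, via $L=L'*\gD$, rather than one vertex at a time), upgrades conditions (i) and (ii) at non-conelike vertices to a homotopy equivalence of pairs $(\wa(i),\wb(i))\simeq (\cone E_i, E_i)$, concludes with Lemma~\ref{l:wzl}, and derives (ii) from the tree-of-spaces structure on the universal cover of $\cz_{\{i,j\}}=(A(i)\times B(j))\cup(B(i)\times A(j))$ --- exactly the covering-space analysis you gesture at.

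The genuine gap is in your necessity argument for (iii), and it is not the difficulty you flagged. The vertices of a minimal non-simplex $J$ are pairwise adjacent in $L$, so some or all of them may be conelike (for instance, if $L=\partial\gD^2$, every vertex is conelike and $J$ is the entire vertex set). At a conelike vertex, condition (ii) --- which is all that asphericity of $\zl\ab$ buys you at this stage --- imposes nothing on $(A(i),B(i))$: path components of $B(i)$ need not be aspherical or $\pi_1$-injective, so $(\wa(i),\wb(i))$ need not be homotopy equivalent to $(\cone E_i,E_i)$, and the universal cover of the retract $\cz_{\partial\gD_J}$ is then \emph{not} the $K(\partial\gD_J)$-realization of a right-angled building. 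Your appeal to Theorem~\ref{t:pi1} and the ``only if'' direction of Lemma~\ref{l:wzl} therefore fails in substance, not just in technical detail. The paper devotes its most delicate passage to precisely this case: for a conelike vertex $i$ it shows $L$ must split as $i*L'$ by contradiction, taking the full subcomplex $L''=\gs\cup(i*\partial\gs)$ (isomorphic to the boundary of a simplex), using only contractibility of each $\wa(j)$ --- so $\wa(j)\simeq\cone(E_j)$, with no hypothesis on $\wb(j)$ --- to show $\cz_{L''}(\bwa,\bwb)$ is $(k-1)$-connected with $H_k(\cz_{L''}(\bwa,\bwb))\cong H_k(\cz_{L''}\Econe)\neq 0$ by a spectral-sequence comparison, the right-hand side being a wedge of $k$-spheres, $k\geq 2$, since $\cz_{L''}\Econe$ is the spherical realization of a product of rank-one buildings; it then finishes by induction on the number of vertices. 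To repair your proof, split your minimal-non-simplex case in two: when every vertex of $J$ is non-conelike your building identification goes through (this is in effect the paper's ``no conelike vertices'' branch), but when $J$ contains a conelike vertex you must replace the homotopy-equivalence-of-pairs step by this homological comparison, which deliberately avoids any knowledge of the $\wb(j)$ beyond their sets of components.
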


\begin{proof}
For simplicity, put $\zl=\zl\ab$.  
First we prove the necessity of these conditions.   Assume $\zl$ is aspherical.  Taking $L'$ to be a single vertex $i$ in Lemma~\ref{l:retract}, we get that $A(i)$ is a retract of $\zl$; hence, (i).  If the vertex $i$ is not conelike then there is another vertex $j$ which is not connected to it.  Consider the full subcomplex $L'=\{i,j\}$.  We have 
\[
\cz_{L'}:= (A(i)\times B(j)) \cup (B(i) \times A(j)).
\]
Consider the open cover of $\cz_{L'}$ by the components of $A(i)\times B(j)$ and the components of $B(i)\times A(j)$.  The nerve of this cover is a complete bipartite graph.  The set of vertices of one type is $\pi_0(B(i))$  and of the other type, $\pi_0(B(j))$.  The edges are indexed by  $\pi_0(B(i))\times \pi_0(B(j))$.  The universal cover, $\wzll$, is covered by components of $\wa(i)\times \wb(j)$ and $\wb(i)\times \wa(j)$ and it has the structure of a tree of spaces.  By Lemma~\ref{l:retract}, $\zll$ is aspherical and hence, $\wzll$ is contractible.  Since each vertex space is a retract of $\wzll$ we see that each component of $\wb(i)\times \wa(j)$ is contractible.  Since $\wa(j)$ is contractible, this means each component of $\wb(i)$ is contractible; hence, (ii).  As in \S\ref{ss:pi1} let $\{(G_i,E_i)\}_{i\in I}$ be the fundamental group data for $\ab$, in other words, $E_i=\pi_0(\wb(i))$.  
If $i$ is not a conelike vertex, then since (ii) holds, the natural map $(\wa(i),\wb(i))\to (\cone(E_i), E_i)$ is a homotopy equivalence. So, if no vertex is conelike, $\wzl(\bwa,\bwb)$ is homotopy equivalent to  $\wzl\Econe$. By Lemma~\ref{l:wzl}, $\wzl\Econe$ is contractible if and only if $L$ is a flag complex.  Hence, when there are no conelike vertices, if  $\wzl(\bwa,\bwb)$ is contractible,  then  $L$ is a flag complex.  

Next, suppose $i$ is a conelike vertex.  Let $L'$ be the full subcomplex spanned by $I-\{i\}$.  We claim $L= i*L'$. Suppose not.  Then there is a simplex $\gs$ in $L'$ such that the join $i*\partial \gs \subset L$ but the full simplex spanned by $\gs$ and $i$ is not in $L$.  Then $L'':= \gs \cup (i*\partial \gs)$ is a full subcomplex of $L$ isomorphic to the boundary of a simplex of dimension  one greater than that of $\gs$.  Note that $\cz_{L''}\Econe$ is the spherical realization of a product building (namely, $\cac''=\prod_{j\in \vertex (L'')}\ E_j$); hence, it  is homotopy equivalent to a wedge of $k$-spheres, where $k=\dim L'' +1\geq 2$.  Since each $\wt{A}_j$ is contractible, it is homotopy equivalent to $\cone (E_j)$.  One can now show that $\cz_{L''}(\bwa,\bwb)$ is $(k-1)$-connected and, by a simple spectral sequence argument, that for the first nonzero homology group we have $H_k(\cz_{L''}(\bwa, \bwb))=H_k(\cz_{L''}\Econe)$.  Hence, $\pi_k(\cz_{L''}(\bwa, \bwb)\neq 0$ for some $k\geq 2$, i.e., $\cz_{L''}(\bwa, \bwb)$ is not aspherical.  But $\cz_{L''}(\bwa, \bwb)$ is a retract of $\zl(\bwa,\bwb)$, contradicting the assumption that $\zl\ab$ is aspherical.  Hence, $L=i*L'$.
By induction on the number of vertices, we may suppose that $L'$ is flag and hence, so is $L$. 

To prove sufficiency, suppose conditions (i), (ii) and (iii) hold.  First we show that we can reduce to the case where there are no conelike vertices.  Since $L$ is flag, if there are conelike vertices, then $L$ can be written as a join $L' *\gD$, where $\gD$ is the simplex spanned by all conelike vertices.  Hence,
	\[
	\zl\ab= \cz_{L'}\ab\times \cz_\gD\ab=\cz_{L'}\ab\times \prod_{i\in \vertex(\gD)}\,A(i) 
	\]
Since each $A(i)$ is aspherical (by (i)), the product is aspherical and we are reduced to proving  that  $\cz_{L'}\ab$ is aspherical.  In other words, we can assume $L$ has no conelike vertices, i.e., that (ii) holds for all $i\in I$.  This implies that each $\wa(i)$ is contractible and that each path component of $\wb(i)$ is contractible.  Therefore, as before, $\wzl(\bwa,\bwb)$ is homotopy equivalent to $\wzl\Econe$.  By Lemma~\ref{l:wzl}, condition (iii) implies that the $\rab$, $\wzl\Econe$, is contractible and hence, so is $\wzl(\bwa,\bwb)$.
\end{proof}

\begin{corollary}\label{c:bgamma}
As in Example~\ref{ex:gprt1},  suppose we are given $L$ and a family of groups $\bG=\{G_i\}_{\i\in I}$.  Let $\gG$ denote their graph product over $L^1$ and let $\ab=\{(BG_i,*)\}_{i\in I}$.  Then $\zl\ab=B\gG$ if and only if $L$ is a flag complex.
\end{corollary}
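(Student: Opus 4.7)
The plan is to combine Theorem~\ref{t:pi1} with Theorem~\ref{t:asphericalflag}. The key observation is that for the pair $(BG_i,*)$, conditions (i) and (ii) of Theorem~\ref{t:asphericalflag} are automatic, so the asphericity of $\zl\ab$ reduces exactly to the flag condition on $L$.

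First I would identify the fundamental group data $(\bG,\bE)$ associated to $\ab=\{(BG_i,*)\}$. The universal cover of $BG_i$ is $EG_i$, and the inverse image of the basepoint $*$ is $G_i$ itself (viewed as a discrete set), on which $G_i$ acts simply transitively. Hence $E_i\cong G_i$ as a $G_i$-set, and by the remark after the definition of the relative graph product, $\zlone(\bG,\bE)$ coincides with the ordinary graph product $\gG$. By Theorem~\ref{t:pi1}, this gives $\pi_1(\zl\ab)=\gG$ for any simplicial complex $L$ on $I$.

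Next I would check the hypotheses of Theorem~\ref{t:asphericalflag}. Condition (i) holds because each $BG_i$ is aspherical by construction. Condition (ii) holds trivially for every vertex (conelike or not) because the base point $*$ is a single aspherical path component whose fundamental group is the trivial group, which injects into $G_i$. Thus conditions (i) and (ii) are satisfied unconditionally. Applying Theorem~\ref{t:asphericalflag}, $\zl\ab$ is aspherical if and only if condition (iii) holds, namely $L$ is a flag complex.

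Combining these two ingredients finishes the proof. If $L$ is flag, then $\zl\ab$ is aspherical with $\pi_1=\gG$, so it is a model for $B\gG$. Conversely, if $\zl\ab=B\gG$, then in particular $\zl\ab$ is aspherical, which by Theorem~\ref{t:asphericalflag} forces $L$ to be a flag complex. There is no real obstacle here since both main theorems have already been established; the only point requiring care is the identification of the fundamental group data with $(G_i,G_i)$ so that the relative graph product recovers the ordinary graph product $\gG$.
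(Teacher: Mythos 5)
Your proposal is correct and is exactly the argument the paper intends (it states the corollary without proof, immediately after Theorem~\ref{t:asphericalflag}): identify the fundamental group data as $E_i\cong G_i$ with the simply transitive action so that Theorem~\ref{t:pi1} and the remark on relative graph products give $\pi_1(\zl\ab)=\gG$, note that conditions (i) and (ii) of Theorem~\ref{t:asphericalflag} hold automatically for the pairs $(BG_i,*)$, and conclude that asphericity of $\zl\ab$ is equivalent to $L$ being flag. No gaps.
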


\section{Corners}\label{s:corners}
\subsection{More definitions}
\paragraph{Nondegenerate simplicial maps and colorings.}  Suppose $L$, $L'$ are simplicial complexes with vertex sets $I$, $I'$, respectively. 

\begin{definition}\label{d:fold}
A simplicial map $L\to L'$ is \emph{nondegenerate} if its restriction to each simplex is injective.  A \emph{coloring}
is a nondegenerate map $L\to \gD$ onto a simplex.  
\end{definition}

\begin{examples}\label{ex:fold}
(1) (\emph{Barycentric subdivisions}).  Suppose $L$ is the barycentric subdivision of an $n$-dimensional convex cell complex $Y$.  Then each vertex $v$ of $L$ is the barycenter of some cell in $Y$.  Let $d(v)$ be the dimension of this cell.  Then $v\mapsto d(v)$ is a map $\vertex(L) \to I(n)$, defining a coloring $d:L\to \gD$.

(2) (\emph{Finite Coxeter complexes}).  Suppose $(W,S)$ is a spherical Coxeter system of rank $n+1$.  A fundamental simplex for $W$ on $\sphere^n$ can be identified with $\gD$ (where the mirror structure on $\gD$ is defined by letting $\gD_i$ be the codimension one face opposite to the vertex $i$.   The Coxeter complex, $\cu(W,\gD)$, is homeomorphic to $\sphere^n$ and the natural projection $\cu(W,\gD)\to \gD$ is a coloring.  A special case of this is where $W=(\boldC_2)^{I(n)}$.  In this case the Coxeter complex can be identified with the boundary complex of an $(n+1)$-dimensional octahedron.

(3) (\emph{Spherical buildings}).  More generally, if $\cac$ is a spherical building of type $(W,S)$, then its simplicial realization, $\cu(\cac,\gD)$, is a simplicial complex and the natural projection $\cu(\cac,\gD) \to \gD$ is a coloring. 

(4) (\emph{Simplices of groups}).  Suppose $\cg$ is a simplex of groups as defined in \S\ref{ss:simpcor} below.   
Let $\cu(\cg,\gD)$ be its universal cover as defined in \S\ref{ss:simpcor}.  If  $\cu(\cg,\gD)$ is a simplicial complex, then the projection $\cu(\cg,\gD)\to \gD$ is a coloring.   For example, this shows that the Deligne complex of an Artin group of spherical type is colorable (see \cite{cdjams}, \cite{deligne} or\ Examples~\ref{ex:spheretype}\,(3) below, for the definition of ``Deligne complex'').
\end{examples}

\paragraph{Properties for mirror structures.} 
Suppose  $\cm$ is a mirror structure on a space $X$ over a set $I$.  The mirror structure is \emph{flag} if its nerve $N(\cm)$ is a flag complex. Equivalently, $\cm$ is flag if for any nonempty subset $J\leq I$, the intersection $X_J$ is nonempty whenever all pairwise  intersections $X_i\cap X_j$ are nonempty for all $i$,$j\in J$. 

A space $X$ is $m$-acyclic (resp. \emph{acyclic}) if its reduced homology groups, $\oh_i(X)$, vanish for all $i\leq m$ (resp. for all $i$).
A mirror structure $\cm$ on a connected space $X$ is $0$-acyclic if each mirror is path connected as is each nonempty intersection of mirrors.  The mirror structure is \emph{acyclic} if  $X$ is acyclic as is each nonempty intersection of mirrors.


Next, we define the ``induced mirror structure'' on the universal cover $\wx$ of $X$.  Let $p:\wx \to X$ be the covering projection.  For each $i\in I$, let $E_i$ denote the set of path components of $p^\minus(X_i)$.  Let $E$ denote the disjoint union of the $E_i$. Let $\gi:E\to I$ be the natural projection which sends $E_i$ to $i$.  There is an  \emph{induced mirror structure} $\wcm=\{\wx_e\}_{e\in E}$  defined by $\wx_e:=e$. Extend $\gi$ to a natural projection $\gi:N(\wcm)\to N(\cm)$, defined by $J\mapsto \gi(J)$.

\paragraph{Corners of spaces.}
Suppose $I(n):=\{0,1,\dots, n\}$ and $\cp(I(n))$ is its power set.  A \emph{corner of spaces} is a path connected space $X$ with a mirror structure $\cm=\{X_i\}_{i\in I(n)}$ such that $X_J\neq \emptyset$ for all $J\in \cp(I(n))$.  So, the nerve of this mirror structure is the $n$-simplex, $\gD$. If $\wcm$ is the induced mirror structure on $\wx$, then $\gi:N(\wcm)\to \gD$ is a coloring.

\begin{example}\label{ex:prod1}(\emph{Products}).  
Suppose we are given the data for a polyhedral product over $\gD$, i.e., a family of pairs $\ab=\{(A(i),B(i))\}_{i\in I(n)}$.
Put $X=\cz_\gD\ab=\prod_{i\in I(n)} A(i)$.  Define a mirror structure on $X$ over $I(n))$ by letting $X_i$ be the set of those points $\bx$ such that $x_i\in B(i)$, i.e., 
	\[
	X_i\cong B(i)\times\prod_{\substack{j\\j\neq i}} A(j). 
	\]
Note that for $J\leq I(n)$, 
	\[
	X_J= \bigcap_{i\in J} X_i \cong \prod_{i\in J} B(i) \times \prod_{j\notin J} A(j).
	\]
\end{example}

\begin{example}\label{ex:cube}(\emph{An $(n+1)$-cube}).  
Suppose $\square$ is the $(n+1)$-dimensional cube $[0,1]^{n+1}$.  Define the mirror $\square_i$ by $x_i=1$.  This is the special case of the previous example where $(A(i),B(i))=([0,1],1)$.  It is also a special case of Example~\ref{ex:dchamber}.
\end{example}

\subsection{Reflection groups and asphericity}\label{ss:rga}
Suppose that $(m(i',j'))$ is a Coxeter matrix on a set $I'$, that the nerve of the corresponding Coxeter system is $L'$ and that $f:L\to L'$ is  nondegenerate simplicial map.  Define a new Coxeter matrix $(m(i,j))$ on $I$ ($=\vertex (L)$) by 
	\begin{equation}\label{e:coxmat}
	m(i,j) =
	\begin{cases}
	1, 		&\text{if $i= j$;}\\
	m(f(i),f(j)), 	&\text{if $\{i,j\}$ is an edge of $L$;}\\
	\infty, 	&\text{otherwise.}
	\end{cases}
	\end{equation}
The new Coxeter system corresponding to $(m(i,j))$ is \emph{induced} from the old one via $f$ (cf.\ \cite{d87}).

Suppose $(W,S)$ is a spherical Coxeter system where $S$ is indexed by $I(n)$ and that $X$ is a corner of spaces over $I(n)$.  When $\cu(W,X)$ is aspherical?  

\begin{theorem}\label{t:flag}
Suppose $\cm$ is a mirror structure on $X$ giving it the structure of a corner of spaces over $I(n)$ and that $(W,S)$ is a spherical Coxeter system where $S$ is also indexed by $I(n)$.  Then $\cu(W,X)$ is aspherical if and only if the following three conditions hold.
	\begin{enumeratei'}
	\item\label{i:1}
	$X$ is aspherical.
	\item
	The induced mirror structure $\wcm$ on the universal cover is acyclic.
	\item
	$\wcm$ is  flag.
	\end{enumeratei'}
\end{theorem}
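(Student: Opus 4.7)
The plan is to identify the universal cover of $\cu(W,X)$ as a basic construction $\cu(\ww,\wx)$ over an induced Coxeter system, and then apply the standard criterion for contractibility of such a basic construction.

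First, because $(W,S)$ is spherical, $L(W,S)$ equals the full simplex $\gD$ on $I(n)$, which matches the nerve $N(\cm)$ of the corner structure on $X$. The coloring $\gi:N(\wcm)\to\gD$ combined with the rule \eqref{e:coxmat} produces an induced Coxeter system $(\ww,\ws)$ whose nerve is $L(\ww,\ws)=N(\wcm)$. Unfolding $\cu(W,X)$ chamber by chamber --- each copy $wX$ is replaced by a copy of $\wx$, and the $W$-coset adjacencies across the mirrors $X_i$ lift to $\ww$-coset adjacencies across the induced mirrors $\wx_e$ --- identifies $\wt{\cu(W,X)}$ with $\cu(\ww,\wx)$. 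This is a higher-dimensional version of Bass--Serre theory of the sort that pervades \cite[Ch.~5, 8, 9]{dbook}. Consequently, $\cu(W,X)$ is aspherical if and only if $\cu(\ww,\wx)$ is contractible.

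Assume (i$'$), (ii$'$), and (iii$'$). Then $\wx$ is contractible, each intersection $\wx_J$ for $J\in\cs(\ww,\ws)$ is acyclic, and $L(\ww,\ws)=N(\wcm)$ is flag. The standard criterion for contractibility of the basic construction (cf.\ \cite[Thm.~8.1 and Thm.~9.1.4]{dbook}) then yields that $\cu(\ww,\wx)$ is contractible, giving sufficiency. For necessity, the retraction $\cu(W,X)\to X$ sending $[(w,x)]\mapsto x$ shows $X$ is aspherical, giving (i$'$). The conditions (ii$'$) and (iii$'$) can be extracted by arguments parallel to the necessity half of the proof of Theorem~\ref{t:asphericalflag}: failure of flagness of $N(\wcm)$ produces, via a full subcomplex isomorphic to the boundary of a simplex (whose associated classical realization is a sphere), a non-vanishing higher homotopy group in a retract of $\cu(\ww,\wx)$, in the manner of Lemma~\ref{l:wzl}; and failure of acyclicity of some $\wx_J$ yields a non-trivial reduced homology class through the spectral sequence computing $H_*(\cu(\ww,\wx))$, again contradicting contractibility.

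The main obstacle is the identification of the universal cover in the first step. Building the correct induced Coxeter system on $N(\wcm)$ and verifying that the corresponding basic construction really is the universal cover requires a careful unfolding analysis; but once that bridge between the corner structure on $X$ and the induced Coxeter theory is in place, the remainder follows by a relatively direct application of the machinery of \cite{dbook} together with obstruction arguments analogous to those already used in the proof of Theorem~\ref{t:asphericalflag}.
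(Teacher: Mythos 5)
Your first half --- unfolding $\cu(W,X)$ to identify its universal cover with $\cu(\ww,\wx)$, where $\ww$ is the Coxeter group induced via $\gi:E\to I(n)$ and \eqref{e:coxmat}, and then invoking the contractibility criterion for the basic construction --- is exactly the paper's route (the paper cites \cite[p.~168]{dbook} for the covering-space identification, so the ``careful unfolding analysis'' you flag as the main obstacle is a quotable result, not something to rebuild). The genuine gap is in your necessity direction. The criterion you cite is in fact \emph{necessary and sufficient}: $\cu(\ww,\wx)$ is contractible precisely when $\wx$ is contractible and $\wx_J$ is acyclic for every nonempty spherical $J\le E$ (\cite[Thm.~9.1.5, Cor.~8.2.8]{dbook}). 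So necessity of (i)$'$ and (ii)$'$ falls out immediately, and the ad hoc obstruction arguments you sketch are both unnecessary and, as written, unsupported: they hinge on retracting $\cu(\ww,\wx)$ onto a sub-basic-construction associated to a full subcomplex of $N(\wcm)$, in analogy with Lemma~\ref{l:retract}. That lemma is about polyhedral products; for the basic construction over a general (non-right-angled) spherical $W$, the generator-killing map $\ww\to\ww_{E'}$ is not even a homomorphism (killing $s_e$ sends a relation $(s_es_{e'})^{m}=1$ to $s_{e'}^{m}=1$, which fails for $m$ odd), so no such retraction is available without further argument.

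You also assert at the outset that $L(\ww,S_E)=N(\wcm)$, which is false in general: one always has $N(\wcm)\subseteq L(\ww,S_E)$, but equality holds exactly when $N(\wcm)$ is flag --- so in the necessity direction you cannot assume it. This is where the paper's one decisive observation, which your proposal misses, does all the work for (iii)$'$: because $W$ is spherical, any pairwise-adjacent subset $J\le E$ maps injectively under $\gi$ to a subset of $I(n)$ generating a finite group, so $L(\ww,S_E)$ is \emph{automatically} a flag complex; and since acyclic implies nonempty, the acyclicity condition in the criterion forces $\wx_J\neq\emptyset$ for every spherical $J$, i.e.\ $N(\wcm)=L(\ww,S_E)$. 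Thus (iii)$'$ is a formal consequence of the (two-sided) criterion plus sphericity of $W$ --- no sphere-realization retract, no spectral sequence. The same observation is also what makes your sufficiency step airtight: (iii)$'$ is needed there to guarantee that the spherical subsets $J$, for which the criterion demands acyclicity of $\wx_J$, actually have nonempty $\wx_J$ so that (ii)$'$ applies (the empty set is not acyclic).
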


\begin{proof}
Let $\ww$ be the Coxeter group induced from $W$ via $\gi:E\to I(n)$, as in \eqref{e:coxmat}.  Let $S_E=\{s_e\}_{e\in E}$ be the corresponding fundamental system of generators for $\ww$.  Let $L(\ww,S_E)$ denote the nerve of this Coxeter system.  According to \cite[p.~168]{dbook}, the natural map $\cu(\ww,\wx)\to \cu(W,X)$ defined by $[\tw,\tx]\mapsto [\gf(\tw), p(\tx]$ is the universal cover (where $\gf:\ww\to W$ is the natural epimorphism).  By definition, $\cu(W,X)$ is aspherical if and only if  $\cu(\ww, \wx)$ is contractible.  By \cite[Thm.~9.1.5, p.~167]{dbook} and \cite[Cor.~8.2.8]{dbook}, this is the case precisely when the following two conditions hold.
	\begin{enumeratei'}
	\item
	$\wx$ is contractible.
	\item\label{i:acyclic}
	For each nonempty spherical subset $J\le E$, the subcomplex $\wx_J$ is acyclic.
	\end{enumeratei'}
It follows from the assumption  that $W$ is spherical, that $L(\ww, S_E)$ is a flag complex.
Since the property of being acyclic implies the property of being nonempty, condition~\eqref{i:acyclic}$'$ entails $N(\wcm)=L(\ww,\wx)$.  So, we must have that 
\begin{itemize}
\item[(iii)$'$]
$N(\wcm)$ is also a flag complex.
\end{itemize}
\end{proof}

\begin{definition}\label{d:aspcorner}
A corner structure on $X$ is \emph{aspherical} if 
\begin{itemize}
\item
$X$ is aspherical,
\item
for all $J\in \cp(I(n))$, each path component of $X_J$ is aspherical and the inclusion of any such component into $X$ induces a monomorphism on fundamental groups.
\end{itemize}
\end{definition}
Note that if $X$ is an aspherical corner of spaces, then the induced mirror structure on $\wx$ satisfies conditions (i)$'$ and (ii)$'$ of Theorem~\ref{t:flag} (since for each spherical subset $J\le E$, the space $\wx_J$ is contractible and hence, \emph{a fortiori}, acyclic).  Condition (iii)$'$ is problematical. However, there are two nice examples when (iii)$'$ is satisfied.  The first is  the case of products
as in Example~\ref{ex:prod1} and the second is the case of Borel-Serre compactifications (as discussed in \S\ref{ss:aspflag}, below).

\subsection{Pullbacks and asphericity}\label{ss:pullback}
For any subset $J\le I(n)$, let $\jcheck$ denote its complement: 
	\begin{equation}\label{e:jcheck}
	\jcheck = I(n)-J
	\end{equation}

Suppose we are given a corner structure $\cm=\{X_i\}_{i\in I(n)}$ on a space $X$ and a coloring $f:L\to \gD^n$.  This induces a map, also denoted by $f$,  from $\cs(L)$ to $\cp (I(n))$ (sending $J$ to $f(J)$).  For each $J\in \cs(L)$,  define a certain subspace $Q(J)$ of $\cs(L) \times X$ (where $\cs(L)$ has the discrete topology) by
	\begin{equation}\label{e:pullback}
	Q(J):=(J,X_{\fjcheck}), 
	\end{equation}
and let  $Q=\bigcup_{J\in \cs(L)} Q(J)$ be the disjoint union of the $Q(J)$.  Define an equivalence relation $\sim$ on $Q$ by identifying $J'\times X_{\primecheck}$ with the corresponding subspace of $J\times X_{\fjcheck}$, whenever $J'\leq J$.  By definition, the \emph{pullback}, $f^*(X)$, is the quotient space $Q/\sim$\,.  

\begin{example}\label{ex:KL}
Suppose, as in Example~\ref{ex:cube}, that $X$ is the cube $[0,1]^{n+1}$.  Then $f^*(L)$ can be identified with $K(L)$, the chamber of $L$ defined in Example~\ref{ex:dchamber}.
\end{example}

\begin{theorem}\label{t:asphericalcorner}
Suppose the corner structure $\cm$ on $X$ satisfies the following conditions.
\begin{enumeratei'}
\item
$X$ is aspherical.
\item
The induced mirror structure $\wcm$ on the universal cover $\wx$ is acyclic.
\item
$\wcm$ is  flag.
\end{enumeratei'}
If $L$ is also a flag complex, then $f^*(X)$ is aspherical.
\end{theorem}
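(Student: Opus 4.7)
The goal is to show that the universal cover $\wt{f^*(X)}$ is contractible. The plan is to adapt the argument used for Theorem~\ref{t:flag}, which this statement generalizes: taking $L=\cu(W,\gD)$ (the Coxeter complex of a spherical system, with its natural coloring from Examples~\ref{ex:fold}(2)) identifies $f^*(X)$ with $\cu(W,X)$, so Theorem~\ref{t:asphericalcorner} subsumes Theorem~\ref{t:flag}.

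First I would identify $\wt{f^*(X)}$ with a pullback $\tilde f^*(\wx)$, where $\tilde f:\tilde L\to N(\wcm)$ is an ``induced'' coloring playing the role of the induced Coxeter system in the proof of Theorem~\ref{t:flag}. Define $\tilde L$ as the fiber product $L\times_\gD N(\wcm)$: vertices are pairs $(v,e)$ with $v\in \vertex(L)$, $e\in E$ and $f(v)=\gi(e)$ (where $\gi:N(\wcm)\to\gD$ is the canonical projection); a subset spans a simplex iff its projections span simplices of $L$ and of $N(\wcm)$. The second projection is $\tilde f$. The identification $\wt{f^*(X)}=\tilde f^*(\wx)$ will follow from two checks: that $\tilde f^*(\wx)$ is simply connected, and that the obvious map $\tilde f^*(\wx)\to f^*(X)$ induced by $p:\wx\to X$ and $\gi$ is a covering projection.

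Next I would verify that $\tilde L$ is a flag complex. Since $L$ is flag by hypothesis and $N(\wcm)$ is flag by (iii)$'$, any subset of $\vertex(\tilde L)$ in which every pair bounds an edge projects to subsets in $\vertex(L)$ and $\vertex(N(\wcm))$ with the same property; by flagness these project-subsets span simplices, and the compatibility of the coloring lifts these to a simplex of $\tilde L$. To then prove $\tilde f^*(\wx)$ contractible, I would cover it by the copies of $\wx_{\tilde f(\tilde J)\check{}}$ indexed by maximal simplices $\tilde J$ of $\tilde L$. By (i)$'$ the ambient $\wx$ is contractible and by (ii)$'$ each nonempty intersection $\wx_K$ (for $K$ a simplex of $N(\wcm)$) is acyclic; the nerve of the cover is (a subcomplex closely related to) the flag complex $\tilde L$. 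A Mayer--Vietoris / homotopy-colimit spectral sequence argument then collapses to give $\tilde f^*(\wx)$ acyclic, and with simple connectedness this forces contractibility via Hurewicz and Whitehead.

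The chief technical obstacle is the first step, establishing $\wt{f^*(X)}=\tilde f^*(\wx)$. The pullback $f^*(X)$ is a colimit of the spaces $X_{\fjcheck}$ glued along the subspace inclusions $X_{\primecheck}\hookrightarrow X_{\fjcheck}$ (for $J'\le J$), and lifting this colimit to universal covers requires carefully tracking how the multiple path components of $p^{-1}(X_{\primecheck})$ in $\wx$ are matched up across all the gluings. The correct bookkeeping is captured precisely by the fiber-product complex $\tilde L$, but a careful van Kampen / development argument (as in the theory of complexes of groups) is needed to certify that the resulting space is indeed the universal cover of $f^*(X)$. Once this identification is in hand, the remainder of the proof follows the pattern of Theorem~\ref{t:flag}, with $\tilde L$ taking the place of $L(\ww,S_E)$.
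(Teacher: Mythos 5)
There is a genuine gap, and it is exactly at the step you flagged as the chief obstacle: the identification $\wt{f^*(X)}=\tilde f^*(\wx)$ with $\tilde L = L\times_\gD N(\wcm)$ is false. The pullback over the fiber product is (under any reasonable reading of the gluing) the \emph{intermediate} cover $f^*(\wx)$, where $\wx$ carries the mirror structure over $I(n)$ given by $\wx_i=\coprod_{e\in\gi^\minus(i)}\wx_e$ --- and this cover is generally not simply connected. Already for $n=0$, take $L$ to be two vertices $u,v$, so $f^*(X)$ is the double of $X$ along $X_0$. Then $f^*(\wx)$ is two copies of $\wx$ glued to each other along \emph{every} component $\wx_e$ of $p^\minus(X_0)$; this is a graph of spaces with two vertex spaces and $|E_0|$ edge spaces, whose fundamental group is free of rank $|E_0|-1\neq 0$ whenever $p^\minus(X_0)$ is disconnected. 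The true universal cover is an infinite tree of copies of $\wx$; more generally (by Remark~\ref{r:310} and the proof of Theorem~\ref{t:flag}) it is a development like $\cu(\ww,\wx)$ with $\ww$ the \emph{infinite} Coxeter group induced via $\gi:E\to I(n)$, so its chambers cannot be indexed by the fiber product, which has only one copy of $\wx$ per maximal simplex of $L$ per compatible lift. Your claimed check ``$\tilde f^*(\wx)$ is simply connected'' therefore fails, and with it the reduction on which the rest of your argument rests; the subsequent Mayer--Vietoris computation is also run over the wrong nerve.

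For comparison, the paper avoids describing the universal cover combinatorially at all. It passes to $f^*(\wx)$ (a covering of $f^*(X)$, so it suffices to show this is aspherical), and compares it with the auxiliary cubical complex $Y=f^*(K)$, where $K=K(N(\wcm))$. Two facts do the work: first, the link of any cubical face of $Y$ is the join of a link of a simplex in $L$ with a link of a simplex in $N(\wcm)$ (Lemma~\ref{l:link}); since both complexes are flag and links in flag complexes are flag, Gromov's Lemma makes $Y$ locally $\cat(0)$, hence aspherical --- this is precisely where both flag hypotheses enter, playing the role your flagness-of-$\tilde L$ observation was meant to play. Second, by condition (ii)$'$ the pieces $\wx_J$ are acyclic, so $f^*(\wx)$ and $Y$ are both assembled from contractible pieces glued along acyclic subspaces in the same combinatorial pattern; van Kampen gives $\pi_1(f^*(\wx))\cong\pi_1(Y)$, and the induced map between universal covers is a homotopy equivalence, so contractibility of $\wy$ transfers to the universal cover of $f^*(\wx)$. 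If you want to salvage your development-style approach, the object replacing $L(\ww,S_E)$ would have to be an infinite chamber system (the development of the associated corner of $G$-sets), not a fiber product; the paper's $Y$-comparison is precisely the device that sidesteps having to construct it.
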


The induced mirror structure on $\wx$ is indexed by $E$.  Using $\gi:E\to I(n)$ we get a mirror structure over $I(n)$ on $\wx$ defined by 
	\begin{equation}\label{e:wx}
	\wx_i=\coprod _{e\in \gi^\minus (i)} \wx_e.
	\end{equation}
Since $f^*(\wx)$ is clearly a covering space of $f^*(X)$, it is equivalent to prove that $f^*(\wx)$ is aspherical.  We will prove this by constructing an auxiliary cubical complex $Y$, homotopy equivalent to $f^*(\wx)$, and then use the theory of polyhedral nonpositive curvature to show that $Y$ is aspherical.

To simplify notation let $N$ be $N(\wcm)$, the nerve of $\wcm$, and  
put $K=K(N)$.  $K$ also has a mirror structure over $I(n)$ defined as in \eqref{e:wx} by $K_i=
\coprod _{e\in \gi^\minus (i)} K_e$.  Our complex $Y$ is $f^*(K)$.  As explained in Example~\ref{ex:dchamber}, the space $K=\cz_N([0,1],1)$ is a cubical complex.  Since $f^*(K)$ is constructed by pasting together copies of $K$ along cubical subcomplexes, we see that $Y$ is also a cubical complex.  

\begin{lemma}\label{l:link}
The link of any cubical face of $Y$ is isomorphic to the join of a link of a simplex in $L$ with the link of a simplex in $N$ (in both cases the empty simplex is allowed).
\end{lemma}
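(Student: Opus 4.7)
The plan is to decompose the combinatorial structure of $Y = f^*(K)$ at each cube into an ``$L$-direction'' and a ``$K$-direction,'' then recognize the link as a join.  The cubical complex $K = K(N) = \cz_N([0,1], 1)$ has cubes parametrized by pairs $(T, T')$ with $T \in \cs(N)$ and $T' \subseteq T$ (with $x_i = 1$ for $i \notin T$, $x_i = 0$ for $i \in T'$, and $x_i \in [0, 1]$ for $i \in T \setminus T'$); such a cube lies in $K_{\fjcheck}$ precisely when $f(J) \supseteq I(n) \setminus \gi(T')$.  Given a cube $\hat c$ of $Y$, I first produce a canonical representative $(J, c)$ with $c = (T, T')$ and $f(J) = I(n) \setminus \gi(T')$: starting from any representative $(J_0, c)$ of $\hat c$, I iteratively discard vertices $v \in J_0$ whose color $f(v)$ lies in $\gi(T')$ (which stays in the equivalence class), arriving at a minimum $J$ with $f(J) = I(n) \setminus \gi(T')$.

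The crux is the uniqueness of this $J$ within its equivalence class: if $J$ and $J'$ were two such minima connected by a chain $J = K_0 \le K_1 \ge K_2 \le \cdots = J'$ within the upward-closed set $\{J'' \in \cs(L) : f(J'') \supseteq I(n) \setminus \gi(T')\}$, then the nondegeneracy of $f$ on each simplex $K_i$ forces $J \subseteq K_i$ by induction (at each peak $K_{i-1} \ge K_i$, the set $\{v \in K_i : f(v) \in f(J)\}$ has cardinality $|J|$ and must coincide with $J$), and hence $J = J'$.  With uniqueness in hand, I parametrize cubes $\hat c^{*}$ strictly containing $\hat c$: each such $\hat c^{*}$ has canonical representative with $J^{*} = J \cup \sigma_L$, $T^{*} = T \cup \sigma_K$, and ${T'}^{*} = T' \setminus \gi^\minus(f(\sigma_L))$, for uniquely determined simplices $\sigma_L \in \Lk(J, L)$ and $\sigma_K \in \Lk(T, N)$, not both empty.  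The well-definedness of ${T'}^{*}$ uses $f(\sigma_L) \subseteq \gi(T')$, which follows from $f$ being injective on the simplex $J \cup \sigma_L$ of $L$.

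The two extensions $\sigma_L$ and $\sigma_K$ can be chosen independently, and a direct dimension count confirms that the map $\hat c^{*} \mapsto \sigma_L * \sigma_K$ is an isomorphism of simplicial complexes $\Lk(\hat c, Y) \cong \Lk(J, L) * \Lk(T, N)$.  Taking $\tau = J$ in $L$ and $\tau' = T$ in $N$ (either of which may be the empty simplex when the corresponding set of missing colors or active mirrors is trivial) yields the claimed decomposition.  The main obstacle is the uniqueness step: without the coloring hypotheses on $f$ and $\gi$, the canonical $J$ could fail to be unique within its equivalence class, and the link formula would cease to be well-defined.
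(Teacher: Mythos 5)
Your proof is correct and takes essentially the same route as the paper: both encode the cubical faces of $Y=f^*(K)$ as normalized pairs $(J,F)$ with $J\in\cs(L)$ and $F$ a face of $K$ recorded by its zero-set and support, and then read off the link as $\Lk(J,L)*\Lk(\supp(F),N)$ --- indeed your normalization $f(J)=I(n)\setminus\gi(T')$ is the consistent one (the paper's printed condition $f(J)=\gi(Z(F))$ is a complementation slip for $\fjcheck=\gi(Z(F))$, as one sees by testing the cone point of a chamber). Beyond that you supply, correctly, the two verifications the paper's terse proof leaves entirely implicit: uniqueness of the minimal representative within its equivalence class (your zig-zag argument using nondegeneracy of $f$ on each simplex of the chain) and the bijective parametrization of cofaces by independent pairs $(\sigma_L,\sigma_K)$, where well-definedness of ${T'}^{*}$ uses nondegeneracy of $\gi$ on $T'$ --- both points genuinely require the coloring hypotheses, as you note.
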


\begin{proof}
Given a closed cubical face $F$ of $K=\cz_N([0,1],1)$, put
\begin{align*}
Z(F)&=\{e\in E\mid x_e\vert_F = 0\}\\
D(F)&=\{e\in E\mid x_e\vert_F = 1\}\\
\supp(F)&=E-D(F)\in \cs(N).
\end{align*}
A cubical face $F'$ of $Y$ is a pair $(J,F)$ where  $J\in \cs(L)$ and $F$ is a cubical face of $K$ with $f(J)=\gi(Z(F))\in \cp(I(n))$.  Moreover, 
\[
\Lk(F',Y)=\Lk(J,L) * \Lk(\supp(F),N).
\]
\end{proof}

\begin{proof}[Proof of Theorem~\ref{t:asphericalcorner}]
The proof consists of two parts.  First of all, we show that the cubical complex $Y$ is locally $\cat (0)$; hence, aspherical.  Secondly, we show that a natural map $\gf: f^*(\wx)\to Y$ is a homotopy equivalence.

By Gromov's Lemma (cf.\ \cite[Thm.~5.18, p.\,211]{bh} or \cite[Appendix I.6]{dbook}), the piecewise Euclidean metric on a cubical complex is locally $\cat(0)$ if and only if the link of each vertex is a flag complex.  It is easy to see that the link of any simplex in a flag complex is again a flag complex.  So, by Lemma~\ref{l:link}, $Y$ is locally $\cat(0)$.  (However, it is generally not simply connected since the intersection of various copies of $K$ in $Y$ need not be connected.)

Since, for each $J\in \cs(N)$, $K_J$ is a cone, there is a natural map $\gf: \wx \to K$, sending $\wx_J$ to $K_J$.
By condition (ii)$'$, for each $J\in \cs(N)$, the intersection of mirrors $\wx_J$ is acyclic.  It follows that, for each $J\in \cs(N)$, the union $X^J$ is acyclic.  In particular, each such $X^J$ is path connected.  Since $f^*(K)$ and $f^*(\wx)$ are both covered by contractible pieces homeomorphic to $K$ or $\wx$ respectively, and since these pieces are glued together in the same combinatorial fashion along the same number of components, it follows from van Kampen's Theorem that $\pi_1(f^*(\wx))\cong \pi_1(f^*(K))$ ($=\pi_1(Y)$).  Let $U$ denote the universal cover of $f^*(\wx)$ (this is also the universal of $f^*(X)$) and let $\wy$ denote the universal cover of $Y$.  Let $\tilde{\gf}:U\to \wy$ be the map covering $\gf$.  Since $U$ and $\wy$ are both unions of contractible pieces which are glued together along acyclic subcomplexes in the same combinatorial pattern, it follows that $\tilde{\gf}$ is a homotopy equivalence. By the previous paragraph $\wy$ is contractible; hence, so is $U$.
\end{proof}

\begin{remark}\label{r:310}
Suppose that $L$ is the Coxeter complex of a spherical Coxeter system $(W,S)$ where $S$ is indexed by $I(n)$ and let $X$ be a corner of spaces over $I(n)$.  By Examples~\ref{ex:fold}\,(2), we can choose a coloring $f:L\to \gD$.  Then $f^*(X)\cong \cu(W,X)$.  Hence, the reflection group trick of \S\ref{ss:rga} is a special case of the pullback construction.
\end{remark}

\subsection{Simplices of groups and corners of groups}\label{ss:simpcor}
A poset $\cp$ can be regarded as a category where there is a morphism from $p$ to $q$ if and only if $p\le q$.  
 A \emph{poset of spaces over $\cp$} is a functor from $\cp$ to the category of spaces and inclusions.  (We require the inclusions to be cofibrations.)  Similarly, a \emph{poset of groups} is a functor from $\cp$ to the category of groups and monomorphisms.

The \emph{colimit} of a poset of spaces $\{X(p)\}_{p\in \cp}$ is the union,
	\begin{equation*}\label{e:union}
	X:=\bigcup_{p\in \cp} X(p)
	\end{equation*}
Similarly, given a poset of groups $\{G(p)\}$ we can form the colimit, $\varinjlim G(p)$.  Of course, if $\cp$ has a final element $q$, then $X=X(q)$ and $G=G(q)$.

A \emph{(simple) simplex of groups} is a poset of groups over $\cp(I(n))_{<I(n)}$.
Thus, $\cg$ associates to each proper subset $J<I(n)$ a group $G_J$ and to each pair $(J',J)$ with $J'\leq J$ a monomorphism $G_{J'}\hookrightarrow G_J$.  The colimit of these groups is the \emph{fundamental group of $\cg$}, denoted $\pi_1(\cg)$.  (See Chapter III.$\cac$ of \cite{bh}.)  $\cg$ is \emph{developable}  if, for each $J< I(n)$, the natural homomorphism $G_J\to \pi_1(\cg)$ is injective.  One way in which a developable simplex of groups can arise is from a cell-preserving action of a group $G$ on a simplicial cell complex with strict fundamental domain an $n$-simplex $\gD$ with its codimension one faces indexed by elements of $I(n)$.  (For us, a ``simplicial cell complex'' will not necessarily be a simplicial complex --  it means a space formed by gluing together simplices along unions of faces; however,  the intersection of two simplices is not required to be a common face.)  The group $G_J$ is then the isotropy subgroup at $\gD_J$.  Conversely, as we shall explain in the next paragraph, given any developable simplex of groups $\cg$, there is an action of $G=\pi_1(\cg)$ on a simply connected simplicial cell complex $U$ with strict fundamental domain $\gD$ so that the isotropy group at $\gD_J$ is  $G_J$.  $U$ is called the \emph{universal cover} of $\cg$.  (Even when $\cg$ is not developable a universal cover can still be defined; however, the isotropy group at $\gD_J$ might  only be a quotient of $G_J$.)

Similarly, a \emph{corner of groups} $\cg$ is a functor from $\cp(I(n))$ to the category of groups and monomorphisms.  Since $I(n)$ is a terminal object of $\cp(I(n))$, the colimit of the $G_J$ is $G_{I(n)}$ ($=\pi_1(\cg)$).  So, a  corner of groups is automatically developable. 
If $\cg$ is any corner of groups, then we get a developable simplex of groups, $\cg_0$, by deleting the terminal object.  Moreover, $G_{I(n)}$ is a quotient of $\pi_1(\cg_0)$. Given a corner of groups $\cg$, put $G=G_{I(n)}$.  

Next, we need to define a corner structure on the $(n+1)$-cube, different from the one in Example~\ref{ex:cube}, which was used above.  To emphasize the difference we will regard the new one as giving the cube the structure of a poset of spaces over $\cp(I(n))^{\mathrm{op}}$ (the power set of $I(n)$ ordered by reverse inclusion).  

\begin{example}\label{ex:posetcube}(\emph{The cube again,} cf.\  Example~\ref{ex:cube}). 
For each $J\le I(n)$ let $\square(J)$ be the face of $\square$ defined by
	\begin{equation*}\label{e:squarej}
	\square(J):=\{x\in \square\mid x_i=0 \text{ for all } i\in J\}.
	\end{equation*}
(The difference from Example~\ref{ex:cube} is that the mirrors are defined by $x_i=0$ rather than $x_i=1$.) 
Note that $\flag(\cp(I(n))$ is a standard subdivision of $\square$ and $\flag(\cp(I(n))_{\le \jcheck}$ is a standard subdivision of $\square(J)$, cf.\ \cite[Appendix A.3]{ab}, \cite[\S4.2]{bp}, or \cite[Ex.~A.2.5]{dbook}. 
\end{example}

Using the new mirror structure on $\square$,  a version of the basic construction gives a cubical complex, $\cu(G/G_\emptyset,\square)$, defined as follows.  As in \eqref{e:Ix}, given $x\in \square$, put $I(x):=\{i\in I(n) \mid x_i=0\}$.  Define an equivalence relation $\sim$ on $G/G_\emptyset \times \square$ by 
	\[
	(gG_\emptyset,x)\sim (g'G_\emptyset,x') \iff x=x' \text{ and } gG_{I(x)}=g'G_{I(x)}, 
	\]
and as in \eqref{e:basic}, put 
	\begin{equation}\label{e:empty}
	\cu(G/G_\emptyset ,\square)=(G/G_\emptyset \times \square)/\sim.  
	\end{equation}
It is the universal cover of $\cg$ as a complex of groups.  If $\gD$ denotes the $n$-dimensional simplex with its usual mirror structure, then one can define $\cu(G/G_\emptyset ,\gD)$ similarly.  Call it the \emph{link} of $\cg$.  The universal cover of $\cu(G/G_\emptyset ,\gD)$ as a space, denoted by $\cu(\cg_0,\gD)$, is the universal cover of $\cg_0$ as a complex of groups.

\begin{examples}\label{ex:spheretype} Suppose $(W,S)$ is a spherical Coxeter system with $S$ indexed by $I(n)$. This gives rise to various corners of groups.

(1) First, it gives the data for corner of groups $\cw$ which assigns $W_J$ to $J$.  Its link is the Coxeter complex $\cu(W,\gD)$. (This is the universal cover of associated simplex of groups $\cw_0$ when  $n>1$.)  The Coxeter complex is a triangulation of $\sphere ^n$.  

(2) Suppose $G$ is a chamber-transitive automorphism group of a spherical building $\cac$ of type $(W,S)$.  Fix a chamber $C\in \cac$ and let $G_J$ denote the stabilizer of the $J$-residue containing $C$.  This gives us a corner of groups $\cg$ with $G_{I(n)}=G$.  The link of $\cg$ is the classical realization, $\cu(\cac,\gD)$.

(3) Let $A$ ($=A_{I(n)}$) be the Artin group of spherical type associated to $(W,S)$.  In a similar fashion to (1), $A$ gives rise to the \emph{corner of Artin groups} $\ca$ which associates to a subset $J\le I(n)$ the corresponding Artin group $A_J$.  In this case, the link of $\ca$ is  the \emph{Deligne complex}.

Actually in all three examples the assumption that $W$ is spherical is unnecessary.
\end{examples}

\begin{nonexample}\label{nonexample}
Suppose that $(W,S)$ is a finite Coxeter system of rank $n+1\ge 3$.  Let $\gD$ be a fundamental simplex so that the Coxeter complex, $\cu(W,\gD)$, is a triangulation of $\sphere^n$.  Let $W^+$ be the orientation-preserving subgroup.  Color the $n$-simplices which are translates of $\gD$ by an element of $W^+$ white and color the others black.  Call the union of the white simplices $\gL$.  (It is the complement in $\sphere ^n$ of the interiors of the black simplices.)  The group $W^+$ acts on $\gL$ with $\gD$ a strict fundamental domain.  It acts on the $\cone \gL$ as well.  Hence we get a corner of groups with link $\gL$.  Since it has ``missing simplices'', $\gL$ is obviously not a flag complex.  If $n+1> 3$, then $\gL$ is simply connected and hence, is the universal cover of the corresponding simplex of groups.  So,  universal covers  of simplices of groups need not be flag complexes.
\end{nonexample}

\begin{example}\label{ex:connected} (\emph{The associated corner of groups}).
Next suppose $\cm=\{X_i\}_{i\in I(n)}$ is a corner structure on $X$ and that $\cm$ is $0$-acyclic.  Put $G_{I(n)}= \pi_1(X)$ and let $G_J$ be the image of $\pi_1(X_{\jcheck})$ in $G_{I(n)}$, where $\jcheck$ is defined in \eqref{e:jcheck}. (Here we want to choose the base point in $X_{I(n)}$.)  This defines a corner of groups $\cg(\cm)$, called the \emph{associated corner of groups}.  Consider the induced mirror structure $\wcm=\{\wx_e\}_{e\in E}$ on the universal cover $\wx$ and let $N(\wcm)$ be its nerve.  The group $G_{I(n)}$ acts on $N(\wcm)$.  If $\wcm$ is also assumed to be $0$-acyclic, then any $n$-simplex of $N(\wcm)$ is a strict fundamental domain.  It follows that $N(\wcm)=\cu(G/G_\emptyset, \gD)$, the link of the associated corner of groups.
\end{example}

In the next example we show that any corner of groups can occur as $N(\wcm)$ for a corner structure on some space $X$.

\begin{example}\label{ex:gpnm}
Suppose $\cg$ is a corner of groups and $G=G_{I(n)}$.  Let $Y$ be any space with $\pi_1(Y)=G$ and with universal cover $\wy$.  Put 
	\[
	X:= \wy \times _G\, \cu(G/G_\emptyset,\square),
	\]
where $\cu(G/G_\emptyset,\square)$ is defined by \eqref{e:empty}.  
Since $\cu(G/G_\emptyset,\square)$ is contractible (it is a cone), by taking projection onto the first factor, we see that $X$ is homotopy equivalent to $Y$.  In particular, if $Y=BG$, then $X$ is also a model for the classifying space of $G$.
Projection onto the second factor induces a map $p:X\to \cu(G/G_\emptyset,\square)/G=\square$.  Put 
	\[
	X_J:= p^\minus (\square (J)) \quad\text{and}\quad X_i:= X_{\{i\}},
	\]
so that $\cm=\{X_i\}_{i\in I(n)}$ is a corner structure on $X$. Since $\square(J)$ is the union of simplices in $\flag(\cp(I(n))_{\le \jcheck}$ with maximum vertex $\jcheck$,  $\cu(G/G_\emptyset,\square(J))$ is homotopy equivalent to $G/G_J$.  So, $p^\minus (\square(J))$ is homotopy equivalent to $\wy \times_G (G/G_J) = \wy/G_J$.  Thus, $\pi_1(X_J)=G_J$.

As a further example we could let $Y=BG$ where, as in Examples~\ref{ex:spheretype}, $G$ is either a spherical Coxeter group $W$ or a spherical Artin group of rank $n+1$.  Then
\begin{align}
BW \sim\, & EW\times_W \cu(W,\square),\label{e:W}\\
BA \sim \,& EA\times_A \cu(A,\square).\label{e:A}
\end{align}
(Formula \eqref{e:A} follows from \cite[Thm.\,1.5.1, p.\,607]{cdjams} and Deligne's solution of the $K(\pi,1)$ Problem for spherical Artin groups in \cite{deligne}.)
\end{example}

\paragraph{Corners of $G$-sets.} 
The hypothesis in Example~\ref{ex:connected} that the corner structure on $X$ is $0$-acyclic seems unnatural.  To get around this we need to talk about corners of $G$-sets rather than corners of groups.  
Suppose $G$ is a group. A \emph{poset of $G$-sets} is a cofunctor from a poset $\cp$ to the category of $G$-sets and $G$-equivariant maps.
For example, given a poset of groups $\{G(p)\}_{p\in \cp}$ with a final object $G=G(q)$, we get a poset of $G$-sets $\{G/G(p)\}_{p\in \cp}$.  
A \emph{corner of $G$-sets} is a cofunctor from $\cp(I(n))$ to $G$-sets.
Suppose $\cE$ is a corner of $G$-sets.  In other words, for each $J\le I(n)$ we are given a $G$-set $E_J$ and whenever $J\le J'$ a $G$-equivariant map $\gf_{J,J'}:E_{J'} \to E_J$.  Put $\gf_J=\gf_{J,I(n)}:E_{I(n)}\to E_J$.   As before, put $\cu(\cE, \square)= (E_{I(n)}\times \square)/\sim$, where $\sim$ is defined by:  
	\[
	(e,x)\sim (e',x') \iff x=x' \text{ and } \gf_{I(x)}(e)=\gf_{I(x)}(e'). 
	\]
The space $\cu(\cE,\square)$ is the \emph{universal cover of the corner of $G$-sets $\cE$}. 
The simplicial cell complex $\cu(\cE,\gD)$ is defined similarly.  Associated to any corner structure $\cm$ on $X$, we have a corner of $G$-sets $\cE(\cm)$ as in Example~\ref{ex:connected} and as in that example we get the following.

\begin{proposition}\label{p:Gset}\textup{(cf.~Example~\ref{ex:connected}).}
Suppose $\cm$ is a mirror structure on $X$ and $\wcm$ is the induced structure on the universal cover. 
Then the nerve of $\wcm$ satisfies $N(\wcm)=\cu(\cE(\cm), \gD)$.  Hence, $K(N(\wcm))=\cu(\cE,\square)$.
\end{proposition}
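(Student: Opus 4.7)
The plan is to unwind both sides and exhibit a natural isomorphism of simplicial cell complexes; the cubical assertion will then follow formally.

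I would first describe both sides on the level of top-dimensional simplices. An $n$-simplex of $N(\wcm)$ is a collection $\{e_0,\dots,e_n\}$ with $e_i\in E_i$ such that $\wx_{e_0}\cap\cdots\cap\wx_{e_n}\ne\emptyset$; conversely, each path component $C$ of $p^{-1}(X_{I(n)})=p^{-1}(\bigcap_i X_i)$ lies in a unique ambient component $e_i(C)\in E_i$ for every $i\in I(n)$, producing the $n$-simplex $\{e_i(C)\}_{i\in I(n)}$. This sets up a natural $G$-equivariant bijection between $E_{I(n)}=\pi_0(p^{-1}(X_{I(n)}))$ and the set of $n$-simplices of $N(\wcm)$, matching the indexing set of top-dimensional copies of $\gD$ on the two sides.

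The heart of the proof is then to check that the face identifications match. For each $K\subseteq I(n)$ and any interior point $x$ of the face of $\gD$ spanned by $\{v_i\mid i\in K\}$, one has $I(x)=\check K$, so the quotient relation on $\cu(\cE(\cm),\gD)$ identifies $(C,x)$ with $(C',x)$ precisely when $\gf_{\check K}(C)=\gf_{\check K}(C')$ in $E_{\check K}$. Unwinding $\cE(\cm)$ as in Example~\ref{ex:connected}, the map $\gf_{\check K}$ is the one on path components induced by the inclusion $p^{-1}(X_{I(n)})\hookrightarrow p^{-1}(X_{\check K})$, so this equality says that $C,C'$ lie in the same component of $p^{-1}(X_{\check K})$. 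Under the bijection of the previous paragraph this is equivalent to requiring $e_i(C)=e_i(C')$ for every $i\in K$, i.e.\ to the $n$-simplices $\{e_i(C)\}$ and $\{e_i(C')\}$ sharing their $K$-face in $N(\wcm)$.

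The hardest part of the argument will be bookkeeping the indexing conventions --- the complement $J\leftrightarrow\check J$ built into the definition of $\cE(\cm)$ and the calculation $I(x)=\check K$ for the isotropy of a face of $\gD$ --- so that the formal identifications really do line up with the simplex-containment relations of $N(\wcm)$. Once $N(\wcm)\cong\cu(\cE(\cm),\gD)$ is established, the consequence $K(N(\wcm))=\cu(\cE(\cm),\square)$ follows formally: by Example~\ref{ex:dchamber}, $K(N)=\cz_N([0,1],1)$ is the cubical chamber obtained from $N$ by replacing each simplex with its dual cube, and $\cu(\cE(\cm),\square)$ performs the parallel replacement of $\gD$ by $\square$ (with the mirror structure of Example~\ref{ex:posetcube}) in the basic construction, so the isomorphism at the $\gD$-level propagates.
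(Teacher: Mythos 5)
There is a genuine error at exactly the point you flagged as the crux: the complement bookkeeping. In Example~\ref{ex:connected} the complement is built into the associated corner itself: $G_J$ is the image of $\pi_1(X_{\jcheck})=\pi_1(X_{I(n)-J})$, \emph{not} of $\pi_1(X_J)$; correspondingly the corner of $G$-sets $\cE(\cm)$ has $E_J=\pi_0\bigl(p^{-1}(X_{I(n)-J})\bigr)$. So for $x$ interior to the face of $\gD$ spanned by $\{v_i\mid i\in K\}$ (where, as you correctly compute, $I(x)=I(n)-K$), the gluing map $\gf_{I(x)}$ is induced by the inclusion $p^{-1}(X_{I(n)})\hookrightarrow p^{-1}(X_{K})$ --- the two complements cancel. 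You instead took $\gf_{I(x)}$ to be induced by $p^{-1}(X_{I(n)})\hookrightarrow p^{-1}(X_{I(n)-K})$, and then asserted that lying in the same component of $p^{-1}(X_{I(n)-K})$ is equivalent to $e_i(C)=e_i(C')$ for all $i\in K$. That equivalence is false: being in the same component of $p^{-1}(X_{I(n)-K})$ forces agreement of the $e_i$ over $I(n)-K$, not over $K$ (in a product corner as in Example~\ref{ex:prod1} the two conditions concern disjoint sets of coordinates). A quick sanity check exposes the problem: with your convention, at $K=I(n)$ the map $\gf_\emptyset$ would land in $\pi_0(p^{-1}(X_\emptyset))=\pi_0(\wx)$, a single point, so \emph{all} chambers of $\cu(\cE,\gD)$ would be glued along their open interiors and the construction collapses; with the complemented convention $\gf_\emptyset$ is the identity on $E_{I(n)}$, as it must be. Your final face-matching condition is the right one, but the step deriving it from the definition of $\cE(\cm)$ is broken: one complement is applied either twice or not at all, and fixing it means redoing the unwinding with $E_J=\pi_0(p^{-1}(X_{\jcheck}))$.

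A secondary point: even with the complements repaired, your claimed bijection between $E_{I(n)}$ and the $n$-simplices of $N(\wcm)$, and the converse implication (sharing the $K$-face in the nerve $\Rightarrow$ same component of $p^{-1}(X_K)$), require that each nonempty intersection $\bigcap_{i\in K}\wx_{e_i}$ be a \emph{single} component of $p^{-1}(X_K)$, and that every component of each $p^{-1}(X_K)$ meet $p^{-1}(X_{I(n)})$; otherwise $\cu(\cE,\gD)$ is a simplicial cell complex with repeated or missing simplices and only maps onto the nerve rather than equaling it. These are precisely the $0$-acyclicity-type hypotheses of Example~\ref{ex:connected}, which the paper suppresses here --- indeed the paper offers no proof beyond ``as in that example,'' where the argument is that $G$ acts on $N(\wcm)$ with an $n$-simplex as strict fundamental domain. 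Your chamber-by-chamber identification is the same argument made explicit, so the architecture is sound; the failure is in the execution of the complement, together with the unacknowledged connectivity hypotheses that the $\Leftarrow$ directions need.
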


\subsection{Examples of aspherical corners satistfying the flag condition}\label{ss:aspflag}
When is $N(\wcm)$ a flag complex?  For us, the most common reason for this to be true will be that it is a spherical building (cf.\ Examples~\ref{ex:flags}\,(2)).

\paragraph{Products.}
Suppose $\ab=\{(A(i),B(i))\}_{i\in I(n)}$ and as in Example~\ref{ex:prod1},  $X:=\cz_\gD\ab=\prod_{i\in I(n)}A(i)$ is the corresponding corner, where $X_i$ is the set of points with $i^{th}$-coordinate lying in $B(i)$.  Put $G(i)=\pi_1(A(i))$ and $G=\pi_1(X)=G(0)\times \cdots \times G(n)$.  Let $p:\wx \to X$ be the universal cover and $E_i$ the $G$-set of path components of $p^\minus(X_i)$.  Let $p(i):\wt{A(i)}\to A(i)$ be the universal cover and let $E(i)$ be the $G(i)$-set of path components of $p(i)^\minus(B(i))$.  Since each $A(j)$ is connected, the $G$-set $E_i$ is identified with the $G(i)$-set $E(i)$, equivariantly with respect to the projection $G\to G(i)$.  (For example, if each $B(i)$ is connected and $H(i):=\pi_1(B(i))$, then $G_i:=\pi_1(X_i)=G(0)\times\cdots \times H(i)\times \cdots \times G(n)$ and
	\[
	E_i=G/G_i \cong G(i)/H(i) = E(i).\text{)}
	\]
As at the end of the previous subsection, we get a corner of $G$-sets $\cE$, with $E_J\cong \prod_{i\in J} E(i)$, for $J\in \cp(I(n))$.  It follows that the simplicial cell complex $\cu(\cE,\gD)$ is the join $E(0)*\cdots\ast E(n)$.  In other words, $\cu(\cE,\gD)$ is the spherical realization of a product of rank one buildings, $E(0)\times \cdots \times E(n)$ (or the cone on such a spherical realization); hence, a flag complex (cf.\ Examples~\ref{ex:flags}\ (1) and (2)).  So, by Proposition~\ref{p:Gset}, $\wcm$ is flag.  Suppose each pair $(A(i),B(i))$ is an aspherical pair (cf.\ Definition~\ref{d:apair}). Then since each component of $X_J$ is aspherical, each component of its inverse image in $\wx$ is contractible.  Thus, $N(\wcm)$ is acyclic.  So, we have proved the following.

\begin{theorem}\label{t:prodflag}
Suppose $\{(A(i),B(i))\}_{i\in I(n)}$ is a family of aspherical pairs and that $X=A(0)\times\cdots\times A(n)$ has corner structure $\cm$ as above.  Then $X$ and $\cm$ satisfy conditions \textup{(i)$'$}, \textup{(ii)$'$}, \textup{(iii)$'$} of Theorems~\ref{t:flag} and \ref{t:asphericalcorner}.
\end{theorem}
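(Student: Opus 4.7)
The plan is to verify each of the three conditions in turn, essentially following the discussion that already precedes the theorem statement; the preceding paragraph amounts to a complete proof, so what I would write is a streamlined organization of it.

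Condition (i)$'$ is immediate: $X = \prod A(i)$ is a finite product of aspherical spaces.

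For condition (iii)$'$, the key is to identify $N(\wcm)$ via the corner of $G$-sets formalism. Writing $G(i) = \pi_1(A(i))$ and $G = \prod_i G(i) = \pi_1(X)$, the universal cover factors as $\wx = \prod_i \wt{A(i)}$. Since each $\wt{A(j)}$ is path connected for $j\ne i$, the $G$-set $E_i$ of path components of $p^\minus(X_i)$ is canonically identified with the $G(i)$-set $E(i)$ of path components of $p(i)^\minus(B(i))$, the $G$-action factoring through projection to $G(i)$. The associated corner of $G$-sets $\cE = \cE(\cm)$ then satisfies $E_J \cong \prod_{i\in J} E(i)$, so by Proposition~\ref{p:Gset}, $N(\wcm) = \cu(\cE, \gD) = E(0) * E(1) * \cdots * E(n)$. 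This is simultaneously the join of discrete sets (Examples~\ref{ex:flags}(1)) and the spherical realization of the product rank-one building $E(0)\times \cdots \times E(n)$ (Example~\ref{ex:rank1.1}, Examples~\ref{ex:flags}(2)); either description gives that $N(\wcm)$ is a flag complex.

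For condition (ii)$'$, fix a nonempty subset $J\le E$, necessarily containing at most one element $e_i \in E_i$ from each $E_i$ (else $\wx_J = \emptyset$). Writing $J' = \{i\in I(n) \mid J\cap E_i \neq \emptyset\}$, the intersection $\wx_J$ is homeomorphic to
\[
\prod_{i\in J'} \wt{B(i)}_{e_i} \;\times\; \prod_{j\notin J'} \wt{A(j)},
\]
where $\wt{B(i)}_{e_i}$ denotes the component of $p(i)^\minus(B(i))$ corresponding to $e_i$. The aspherical pair hypothesis gives that $\pi_1$ of each path component of $B(i)$ injects into $G(i)$, so each $\wt{B(i)}_{e_i}$ is contractible; since each $\wt{A(j)}$ is also contractible, $\wx_J$ is a product of contractibles, hence contractible and in particular acyclic.

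There is no serious obstacle here. The only mild subtlety is the bookkeeping in step (iii)$'$: one must check that the $G$-set $E_i$ is the same whether viewed as components of $p^\minus(X_i)\subset \wx$ or as components pulled back from $p(i)^\minus(B(i))\subset \wt{A(i)}$, which follows from connectedness of the other factors $\wt{A(j)}$.
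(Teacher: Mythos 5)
Your proof is correct and takes essentially the same route as the paper, whose own argument is exactly the paragraph preceding the theorem: identify the $G$-set $E_i$ with the $G(i)$-set $E(i)$ using connectedness of the other factors, deduce via Proposition~\ref{p:Gset} that $N(\wcm)=\cu(\cE,\gD)=E(0)*\cdots *E(n)$ is flag as the spherical realization of a product of rank one buildings, and use the aspherical-pair hypothesis to get contractibility (hence acyclicity) of the components of the preimages of the $X_J$. Your explicit product decomposition of $\wx_J$ merely makes concrete the paper's one-line contractibility claim, so there is nothing to flag.
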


\paragraph{Borel-Serre compactifcations.}
Suppose $\bG$ is a semisimple linear algebraic group defined over $\qq$.  Let $\bG(\rr)$ and $\bG(\qq)$ denote its real and rational points , respectively.  Suppose $\bG$ has $\qq$ rank $n+1$.  Let $K$ be the maximal compact subgroup of $\bG(\rr)$ and $D$ denote the symmetric space $K\backslash \bG(\rr)$.  In \cite{bs} Borel and Serre introduced a bordification $\od$ of $D$ such that
\begin{itemize}
\item
$\od$ is a manifold with corners (see \S\ref{ss:corman}, below).
\item
$\od$ is $\bG(\qq)$-stable.
\item
If $\gG< \bG(\qq)$ is any arithmetic lattice in $\bG(\rr)$, then $\od/\gG$ is compact.
\end{itemize}
If $\gG$ is torsion-free, then $M:=K\backslash \bG(\rr)/\gG$ is a smooth manifold with corners, called the \emph{Borel-Serre compactification} of $D/\gG$.  Associated to a $\qq$-split torus there is a spherical Coxeter system $(W',S')$ of rank $n+1$.  Index $S'$ by $I(n)$.  The strata of $M$ (or of $\od$) correspond to subsets of $I(n)$.  Let $M_J$ denote the stratum correponding to $(W'_{\jcheck}, S'_{\jcheck})$, where $\jcheck $ is defined by \eqref{e:jcheck}. The codimension one strata are the $M_i$ ($=M_{\{i\}}$).  This gives $M$ a corner structure, $\cm=\{M_i\}_{i\in I(n)}$.  (In fact, $M$ is a ``corner of manifolds'' as in Definition~\ref{d:mfldcor}, below.)  For us, the most salient feature of this mirror structure is the following
\begin{itemize}
\item
The the nerve of induced mirror structure $\wcm$ on $\od$  can be identified with the spherical building for $\bG(\qq)$.  
\end{itemize}
So, $N(\wcm)$ is a flag complex (cf.\ Examples~\ref{ex:flags}\,(2)).
The strata have the following structure:
\begin{itemize}
\item
Each $M_J$ is a manifold with corners.  
\item
The interior of $M_J$ is a fiber bundle over a locally symmetric space with fiber a nilmanifold.  
\item
In particular, each $M_J$ is aspherical and the map $\pi_1(M_J)\to \pi_1(M)=\gG$ is injective.
\end{itemize}
(See \cite[pp. 637--640]{ab}, \cite{bs},\cite{goresky}, \cite{l}.)  So, we have proved the following. 

\begin{theorem}\label{t:bsflag}
Suppose $M$ is the Borel-Serre compactification of $D/\gG$, where $\gG$ is a torsion-free, arithmetic group of $\qq$-rank $n+1$ and where $M$ has corner structure $\cm$ as above.  Then $M$ and $\cm$ satisfy conditions \textup{(i)$'$}, \textup{(ii)$'$}, \textup{(iii)$'$} of Theorems~\ref{t:flag} and \ref{t:asphericalcorner}.
\end{theorem}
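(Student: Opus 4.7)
The plan is to verify the three conditions (i)$'$, (ii)$'$, (iii)$'$ of Theorems~\ref{t:flag} and~\ref{t:asphericalcorner} one at a time, invoking the structural results of Borel--Serre summarized in the bullet list immediately preceding the statement. Each of the three conditions corresponds almost verbatim to one of those bullets; the work is just to phrase them in the present language of corner structures.

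For (i)$'$, Borel and Serre show in \cite{bs} that $\od$ deformation retracts onto $D$ and is therefore contractible, and that a torsion-free arithmetic $\gG$ acts freely and properly discontinuously on $\od$. Hence $\od\to M$ is a universal covering and $M$ is a $K(\gG,1)$; in particular $M$ is aspherical.

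For (iii)$'$, the codimension-one mirrors $\od_e$ of the induced structure $\wcm$ on $\od$ are exactly the boundary strata $e(P)$ indexed by proper rational maximal parabolic subgroups $P$ of $\bG$. For a subset $J\subseteq E$, the intersection $\od_J=\bigcap_{e\in J}\od_e$ is nonempty precisely when $P_J:=\bigcap_{e\in J}P_e$ is still a rational parabolic, which is by definition the condition for $\{P_e\}_{e\in J}$ to span a simplex of the rational spherical Tits building $\gD_\qq(\bG)$. Thus $N(\wcm)\cong \gD_\qq(\bG)$, and since every spherical building is a flag complex by Examples~\ref{ex:flags}(2), condition (iii)$'$ holds.

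For (ii)$'$, each open stratum $e(P)$ is homeomorphic to the product of the symmetric space of the rational Levi factor of $P$ with the unipotent radical of $P(\rr)$; both factors are Euclidean, so $e(P)$ is contractible. For any simplex $J$ of $N(\wcm)$, the closed stratum $\od_J$ is the Borel--Serre stratum for the smaller parabolic $P_J$ and is contractible by the same product decomposition. Hence every $\od_J$ is acyclic.

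The main nontrivial ingredient, and the only place where Borel--Serre theory is really used, is the dictionary matching mirror incidences on $\od$ with parabolic incidences in $\gD_\qq(\bG)$; this follows from the standard description of the boundary combinatorics, namely $\overline{e(P)}=\bigsqcup_{Q\subseteq P}e(Q)$. With that dictionary in place, the three conditions reduce to direct quotations from \cite{bs}, \cite{ab}, and Examples~\ref{ex:flags}(2).
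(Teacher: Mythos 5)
Your proposal is correct and takes essentially the same approach as the paper, whose proof consists precisely of the bulleted Borel--Serre facts you quote: contractibility of $\od$ gives (i)$'$, the identification of $N(\wcm)$ with the rational spherical building for $\bG(\qq)$ gives (iii)$'$ via Examples~\ref{ex:flags}\,(2), and the structure of the strata gives (ii)$'$ (the paper phrases this last point downstairs, noting each $M_J$ is aspherical with $\pi_1(M_J)\to\gG$ injective, so each component of $p^\minus(M_J)$ is the universal cover of $M_J$ and hence contractible, whereas you argue upstairs in $\od$ directly). The one small imprecision in your version is the claim that the closed stratum $\od_J=\overline{e(P_J)}$ is contractible ``by the same product decomposition'': it is not literally a product, since it contains the lower strata $e(Q)$ with $Q\subset P_J$; rather it is the Borel--Serre bordification of $e(P_J)$ and deformation retracts onto it, which still yields the contractibility (hence acyclicity) required by (ii)$'$.
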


In the case of the Borel-Serre compactification,  the fact that the reflection group trick yields an aspherical manifold $\cu(W,M)$ (i.e., that the conclusion of Theorem~\ref{t:flag} holds) has been proved previously by Phan \cite{phan}.

\begin{remark}
Suppose that, for $i\in I(n)$, $\gG(i)$ is a torsion-free  arithmetic group of rank 1,  that $D(i)/\gG(i)$ is the corresponding locally symmetric space and that $M(i)$ is its Borel-Serre compactification.  Then $\gG:=\gG(0)\times \cdots \times \gG(n)$ is an arithmetic group  of rank $n+1$ and $M=M(0)\times \cdots \times M(n)$ is the Borel-Serre compactication of its locally symmetric space.  Moreover, $(M(i),\partial M(i))$ is an aspherical manifold with $\pi_1$-injective, aspherical boundary.  So, in this case, the Borel-Serre compactification is a special case of the products discussed at the beginning of this subsection. 
\end{remark}

\paragraph{More examples.}  In Examples~\ref{ex:flags} and \ref{ex:fold} we gave some examples of colorable flag complexes, e.g., 
\begin{itemize}
\item
triangulations of $S^n$ arising from simplicial arrangements in $\rr^{n+1}$ (Examples~\ref{ex:flags}\, (3)) and Examples~\ref{ex:simple}\, (1) below, 
\item
barycentric subdivisions, i.e., flag complexes of the poset of cells in cell complexes (Examples~\ref{ex:fold}\,(1)).
\end{itemize}

\begin{examples}\label{ex:simple}
(\emph{Some simple polytopes}).  We discuss three types of simple polytopes, which are dual to colorable flag complexes.  

(1) (\emph{Zonotopes}). Suppose $\gL$ is the boundary complex of the simplicial polytope associated to some simplicial hyperplane arrangement in $\rr^{n+1}$.   Its dual polytope $P$ is a \emph{zonotope} (a simple zonotope since $\gL$ is a simplicial complex).  Let $\cf (P)$ be the set of codimension one faces of $P$.  It can be seen that $\gL$ is colorable (cf.\ \cite[Lemma~4.2.6]{djs}).  A coloring $f:\gL\to \gD$ induces a function also denoted by $f$ from $\cf$ to $I(n)$.  Define a mirror structure $\{P_i\}_{i\in I(n)}$ on $P$ by
\[
P_i= \bigcup_{F\in f^\minus(i)} F.
\]
For example, if we start with the coordinate hyperplane arrangement, then $\gL$ is the boundary complex of a $(n+1)$ dimensional octahedron and $P$ is the cube $\square= \cz_\gD(D^1, S^0)$ where $\square _i$ is defined by $x_i=\pm 1$

(2) (\emph{Duals of barycentric subdivisions}).  Suppose $\gL$ is the barycentric subdivision of the boundary complex of a convex polytope in $\rr^{n+1}$ and that $P$ is the dual polytope to $\gL$.  The coloring $d$ of Examples~\ref{ex:fold}\,(1) induces $d:\cf(P)\to I(n)$ and we define $P_i$ as before.

(3) (\emph{Coxeter zonotopes}).  Suppose $(W,S_{I(n)})$ is a spherical Coxeter system of rank $(n+1)$.  Then $W$ has a representation as an orthogonal reflection group on $\rr^{n+1}$ and the corresponding triangulation $\gL$ of $S^n$ is the Coxeter complex, $\cu(W,\gD)$. The dual zonotope $P$ is called the \emph{Coxeter polytope} in \cite{dbook}.  We have that $P\cong \cu(W,\square)$, which is the universal cover for the corner of groups $\cw$ discussed in  Examples~\ref{ex:spheretype}\,(1).  So, as in \eqref{e:W}, we have the aspherical corner $EW\times _W P$.

In all three cases, conditions (i)$'$, (ii)$'$, (iii)$'$ of Theorems~\ref{t:flag} and \ref{t:asphericalcorner} are satisfied.  The aspherical manifolds and spaces which result from applying these theorems were discussed previously in \cite{d87}.
\end{examples}

\begin{example}\label{ex:sphericalart} 
(\emph{Artin groups of spherical type}).  Suppose, as in Examples~\ref{ex:simple}\, (3), that $(W,S)$ is a spherical Coxeter group of rank $n+1$.  Let $A$ be the associated Artin group.   Salvetti defined a cell complex $X'$ which was homotopy equivalent to the complement of the reflection hyperplane arrangement in  $\cc^{n+1}$.  (The fundamental group of $X'$ is the associated pure Artin group.) The quotient $X:=X'/W$ is called the \emph{Salvetti complex}.  Its fundamental group is $A$.  The CW complex $X$ can be formed by identifying faces of the same type in the Coxeter zonotope $P$ associated to $(W,S)$ (cf.\ \cite{cd95}).  (When $W$ is right-angled and $L$ is the associated flag complex, $X$ is the polyhedral product $\zl(S^1,1)$ of Example~\ref{ex:RAAGs}.  This defines a mirror structure $\cm=\{X_i\}_{i\in I(n)}$, where $X_i$ is the image of $P_i$ in $X$ (cf.\ Example~\ref{ex:simple}\ (1)).  The associated corner of groups is the corner of groups $\ca$ defined in Example~\ref{ex:spheretype}\,(3). Its link is the Deligne complex $\cu(A,\gD)$.  As mentioned previously in Examples~\ref{ex:flags}\, (4), it was proved in \cite{cdjams} that $\cu(A,\gD)$ is a flag complex. The CW complexes $X$ and $EA\times _A\, \cu(A,\square)$ are homotopy equivalent (as corners of spaces).  It follows that $X$ is an aspherical corner, satisfying conditions (i)$'$, (ii)$'$, (iii)$'$ of Theorems~\ref{t:flag} and \ref{t:asphericalcorner}.   
\end{example}

\section{Closed aspherical manifolds}\label{s:closed}
\subsection{Corners of manifolds}\label{ss:corman}
An $n$-dimensional smooth \emph{manifold with corners} is a second countable Hausdorff space $M$ which is differentiably locally modeled on $[0,\infty)^n$.  If $\gf:U\to [0,\infty)^n$ is any coordinate chart, then the number of coordinates of $\gf(x)$ which are equal to $0$ is independent of the chart and is denoted by $c(x)$.  An \emph{(open) stratum} of codimension $k$ is a component of $\{x\in M\mid c(x)=k\}$.  A \emph{stratum} is the closure of such a component.  As in \cite[p.~180]{dbook} one can define a topological manifold with corners in a similar fashion.

For example, 
a simple polytope is a manifold with corners.

\begin{definition}\label{d:mfldcor}
Suppose $M$ is a manifold with corners.
A corner structure $\{M_i\}_{i\in I(n)}$ on $M$ is a \emph{corner of manifolds} if  each $M_J$ is a union of closed strata of codimension $|J|$.
\end{definition}

\begin{examples}\label{ex:corman}(\emph{Corners of manifolds}).

(1) (\emph{Products of manifolds with boundary}). Suppose 
$\{(M(i),\partial M(i))\}_{i\in I(n)}$ is a  collection of manifolds with boundary.  Then $M:=\prod_{i\in I(n)} M(i)$ is a manifold with corners.  As in Example~\ref{ex:prod1}, a corner structure on $M$ is defined by 
\[
M_i=M(0)\times \cdots\times \partial M(i)\times \cdots \times M(n)\cong \partial M(i) \times \prod_{j, j\neq i} M(j).    
\]
This gives $M$ the structure of a corner of manifolds.  
A point $\bx \in M$  lies in an (open) stratum of codimension $k$ if $\{i\mid x_i\in \partial M(i)\}$ has precisely $k$ elements.

(2) (\emph{Sequence of fiber bundles with manifolds with boundary as fibers}).  This is a generalization of the previous example.  Suppose we have a sequence of fiber bundles $\pi_i:X(i)\to X(i-1)$, with $-1\le i \le n$, where $X(-1)$ is a point and where the fiber of $\pi_i$ is a manifold with boundary, $M(i)$.  Then the points of $X(i)$ lying in the boundaries of the fibers form a subbundle with fiber $\partial M(i)$, which we denote $\partial_i X(i)\to X(i-1)$.  Put $\partial_0 X_0= \partial M(0)$.  Suppose $i>0$ and suppose by induction that  we have defined $\partial_j X(i-1)$ for all $j\le i-1$.  Put $\partial_j(X(i)):=\pi_i^\minus (\partial_j X(i-1))$.  It is clear that $X(i)$ is a manifold with corners locally modeled on $M(0)\times\cdots \times M(i)$.  Moreover, if we set $X=X(n)$ and define a mirror structure $\{X_i\}_{i\in I(n)}$ by $X_i=\partial_i X$, then $X$ is a corner of manifolds.

(3) (\emph{Borel-Serre compactifications}).  See \S\ref{ss:aspflag}.

(4) (\emph{Certain hyperbolic manifolds with corners}).  These examples are related to the ``strict hyperbolization procedures'' of \cite{cdstrict}.  According to \cite[Cor.\ 6.2]{cdstrict}, for each  $n$, one can construct a hyperbolic $(n+1)$-manifold with corners $X$ with corner  structure $\cm=\{X_i\}_{i\in I(n)}$ such that 
\begin{itemize}
\item
Each $X_i$ has two components, each of which is a  codimension one stratum.  (Moreover, if we regard each $X_i$ as consisting of two mirrors, then the nerve of the resulting mirror structure is the boundary complex of an $(n+1)$-octahedron.)
\item
Each component of $X_i$ is totally geodesic and these components intersect orthogonally.
\end{itemize}
Let $\wx$ be the universal cover.  We claim that the nerve $N=N(\wcm)$ of the induced mirror structure on $\wx$ is a flag complex.  The argument is somewhat indirect.  Apply the right-angled reflection group trick to $X$ to get a closed hyperbolic manifold $\cu((\boldC_2)^{I(n)}, X)$.  Its universal cover $\cu(W_N,\wx)$ can be identified with hyperbolic $(n+1)$-space, which is contractible.  (Here $W_N$ is the $\racg$ associated to the $1$-skeleton of $N$.) As we pointed out in the proof of Theorem~\ref{t:flag}, a necessary condition for $\cu(W_N,X)$ to be acyclic is that $N$ be a flag complex (cf.\ \cite[Cor.~8.2.8]{dbook}).  Hence, $N$ is a flag complex.  So, $X$ and $\cm$ satisfy conditions (i)$'$, (ii)$'$, (iii)$'$ of Theorems~\ref{t:flag} and \ref{t:asphericalcorner}.  If $f:L\to \gD$ is a coloring of a flag complex $L$ and  we apply Theorem~\ref{t:asphericalcorner}, then $f^*(X)$ is a space constructed previously in \cite{cdstrict}:  the strict hyperbolization of the cubical complex $f^*(\square)$. 
\end{examples}

\subsection{Closed manifolds}\label{ss:examples}
\begin{proposition}\label{p:closed1}
Suppose $L$ is a simplicial complex with vertex set $I$ and $\bman=\{(M(i),\partial M(i))\}_{i\in I}$ is a collection of manfolds with boundary.  If $L$ is a triangulation of a sphere, then $\zl\bman$ is a closed manifold.
\end{proposition}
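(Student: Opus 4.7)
The plan is a purely local analysis. Compactness will come for free from the compact product $\prod_{i\in I} M(i)$ (implicitly, each $M(i)$ is compact), so the task reduces to exhibiting a Euclidean chart around every $\bx\in \zl\bman$ whose dimension is independent of $\bx$.

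Fix $\bx$ and set $J:=\supp(\bx)\in \cs(L)$. For $i\in J$ take an open ball $U_i\cong B^{d_i}$ around $x_i$ in the interior of $M(i)$, where $d_i=\dim M(i)$. For $i\notin J$ take a collar chart $U_i\cong [0,1)\times B^{d_i-1}$ about $x_i\in \partial M(i)$, with coordinates $(t_i,u_i)$ chosen so that $i\in \supp(\by)$ iff $t_i>0$. For $\by\in \prod_i U_i$ one automatically has $\supp(\by)\supseteq J$, and the constraint $\supp(\by)\in \cs(L)$ becomes: $\{i\in I\setminus J:t_i>0\}$ is a simplex of $\Lk(J,L)$. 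Vertices of $I\setminus J$ which are not in $\Lk(J,L)$ are thus forced to have $t_i=0$, and using that the polyhedral product distributes over a Cartesian factor common to all pairs, one obtains
\[
\zl\bman \cap \prod_{i\in I} U_i \;\cong\; \Bigl(\prod_{i\in J}B^{d_i}\Bigr)\times \Bigl(\prod_{i\in I\setminus J}B^{d_i-1}\Bigr)\times \cz_{\Lk(J,L)}([0,1),\{0\}).
\]

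The last factor is where the sphere hypothesis enters. Passing to radial coordinates $t_i=r s_i$ with $\sum s_i=1$ identifies $\cz_{\Lk(J,L)}([0,1),\{0\})$ with the open cone on the geometric realization $|\Lk(J,L)|$, a polyhedral-product incarnation of the identification of $K(L)$ with a cone in Example~\ref{ex:dchamber}. Since $L$ triangulates a $d$-sphere, $\Lk(J,L)$ triangulates $S^{d-|J|}$, so its open cone is homeomorphic to $\rr^{d-|J|+1}$. All three factors above are Euclidean, so $\bx$ has a Euclidean neighborhood of dimension $\sum_{i\in I} d_i - |I| + d + 1$, independent of $\bx$. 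Hence $\zl\bman$ is a topological manifold without boundary, and the compactness remark makes it closed. The only nontrivial step is the identification of the cone factor with $\rr^{d-|J|+1}$ via the classical fact that the cone on a sphere is a disk; everything else is routine product-chart bookkeeping matching the polyhedral-product constraint.
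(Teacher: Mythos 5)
Your chart bookkeeping is fine, and your local model is in substance the same neighborhood analysis the paper uses (the paper does not prove Proposition~\ref{p:closed1} directly: it derives it as essentially a special case of Proposition~\ref{p:closed2}\,(b), whose proof decomposes $f^*(M)$ into pieces with neighborhoods of the form $\cone(\Lk(J,L))\times(\text{stratum interior})$). But there is a genuine gap at the sentence ``Since $L$ triangulates a $d$-sphere, $\Lk(J,L)$ triangulates $S^{d-|J|}$.'' That is true for PL triangulations but false in general: by the Edwards--Cannon double suspension theorem, the double suspension of the Poincar\'e homology $3$-sphere $P$ is a (non-PL) triangulation of $S^5$ in which the link of a suitable edge is $P$ itself. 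For such a simplex the open cone on the link is \emph{not} locally Euclidean at the cone point (the complement of the cone point fails to be simply connected near it), so your identification of the cone factor with $\rr^{d-|J|+1}$ breaks down precisely at the step you call the only nontrivial one, for every nonempty $J$ whose link is a homology sphere that is not a sphere.

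The proposition is still correct, and the repair is exactly what the paper does in proving Proposition~\ref{p:closed2}\,(b): for an arbitrary triangulation of a sphere, $\Lk(J,L)$ is a generalized homology sphere, so $\coneo(\Lk(J,L))$ is a polyhedral homology manifold, and by Edwards' theorem (a polyhedral homology manifold of dimension $\ge 5$ is a manifold if and only if all vertex links are simply connected; equivalently, via the double suspension theorem) its product with $\rr^m$ is locally Euclidean for any $m\ge 1$. Your chart already supplies the needed Euclidean factor whenever $J\neq\emptyset$: each $\partial M(i)$ is nonempty, so $d_i\ge 1$ and $\prod_{i\in J}B^{d_i}$ has dimension $\ge|J|\ge 1$. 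When $J=\emptyset$ the relevant link is $L$ itself, which genuinely is homeomorphic to $S^d$, so the cone factor is honestly $\rr^{d+1}$; this matches the paper's separate treatment of the deepest stratum, where it uses only that $\cone(L)$ is a disk. With this one substitution your argument is correct and coincides with the paper's; alternatively, adding a PL hypothesis on the triangulation would validate your original chart. (Your parenthetical that compactness of the $M(i)$ is implicitly assumed is right and consistent with the paper's reading of ``closed.'')
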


\begin{proposition}\label{p:closed2}
Suppose $\{(M_i,\partial M_i)\}_{i\in I(n)}$ is a corner of manifolds structure on $M$.  
\begin{enumeratea}
\item
Suppose $W$ is a finite Coxeter group of rank $n+1$.  Then $\cu(W,M)$ is a closed manifold.
\item
Suppose $L$ is a triangulation of a sphere and $f:L\to \gD$ is a coloring (so that the pullback can be defined).  Then $f^*(M)$ is a closed manifold.
\end{enumeratea}
\end{proposition}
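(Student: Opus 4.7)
The plan is to derive (a) from (b) and then establish (b) by a local Euclidean check. For (a), recall from Examples~\ref{ex:fold}(2) that the Coxeter complex $\cu(W,\gD)$ of a spherical rank-$(n+1)$ Coxeter system is a triangulation of $S^n$ equipped with a canonical coloring $f\colon \cu(W,\gD)\to\gD$, and by Remark~\ref{r:310} one has $f^*(M)\cong\cu(W,M)$. So (a) reduces to applying (b) with this $L$ and $f$.

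For (b), $f^*(M)$ is compact because $L$ has finitely many simplices and each $Q(J)=\{J\}\times X_{\fjcheck}$ is compact. It remains to exhibit a Euclidean neighborhood at each point $[J,x]\in f^*(M)$. Set $K:=I(x)$, $k:=|K|$, and $K\check{}:=I(n)\setminus K$, so that $x$ lies in the codim-$k$ stratum and a neighborhood of $x$ in $M$ is modelled by $\rr^{d-k}\times[0,\infty)^K$. The first step is to pin down the equivalence class of $[J,x]$: the condition $x\in X_{f(J')\check{}}$ rearranges to $f(J')\supseteq K\check{}$, and nondegeneracy of the coloring forces the $K\check{}$-colored vertex subset of every such $J'$ to equal a fixed simplex $J_0\subseteq J$ with $f(J_0)=K\check{}$. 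Face relations inside this subposet preserve $J_0$, so the equivalence class of $[J,x]$ is exactly $\{J_0\cup J_1:J_1\in\cs(\Lk(J_0,L))\}$.

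The link $\Lk(J_0,L)$ is a triangulation of $S^{k-1}$ (since $L$ triangulates $S^n$ and $|J_0|=n+1-k$), with all vertex colors in $K$ by nondegeneracy, so restriction of $f$ gives a coloring onto the $(k-1)$-face $\gD[K]\subseteq\gD$ spanned by $K$. A neighborhood of $[J,x]$ in $f^*(M)$ then takes the form $\rr^{d-k}\times V$ where
\[
V \;=\; \operatorname*{colim}_{J_1\in\cs(\Lk(J_0,L))}\,[0,\infty)^{f(J_1)}\;\cong\;f|^*\bigl([0,\infty)^K\bigr).
\]
Using that $f^*$ commutes with cones (both are colimits) and that the pullback of a simplex along a nondegenerate coloring reassembles the realization of the source (the base case $f^*(\gD)\cong|L|$ is immediate from the definitions), one gets $V\cong\cone(|\Lk(J_0,L)|)\cong\cone(S^{k-1})\cong D^k$. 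Thus a neighborhood of $[J,x]$ is $\rr^{d-k}\times D^k\cong\rr^d$ near the cone point, and $f^*(M)$ is a closed topological $d$-manifold.

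The main obstacle will be the combinatorial bookkeeping in the second paragraph: verifying that the equivalence class of $[J,x]$ is exactly $\{J_0\cup J_1:J_1\in\cs(\Lk(J_0,L))\}$, and that the corresponding colimit of orthants coincides with the restricted pullback $f|^*\bigl([0,\infty)^K\bigr)$. Once this reduction to a pullback over a spherical link is set up, the manifold property follows formally from the open cone on a sphere being Euclidean space.
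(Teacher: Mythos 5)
Your overall skeleton matches the paper's: the reduction of (a) to (b) via Remark~\ref{r:310} is exactly how the paper disposes of (a), and your identification of the equivalence class of $[J,x]$ with $\{J_0\cup J_1: J_1\in\cs(\Lk(J_0,L))\}$ is the bookkeeping behind the paper's (terser) statement that a piece $J\times\mfj$ has a neighborhood of the form $\cone(\Lk(J,L))\times\mfj$. The gap is the step ``$\Lk(J_0,L)$ is a triangulation of $S^{k-1}$ (since $L$ triangulates $S^n$).'' This is true for \emph{PL} triangulations, but the proposition assumes only that $|L|\cong S^n$, and links of simplices in a topological triangulation of a sphere need only be \emph{homology} spheres. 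By the Edwards--Cannon double suspension theorem, the double suspension of the Poincar\'e homology $3$-sphere $P^3$ is homeomorphic to $S^5$, and in that triangulation the link of an edge joining the two suspension circles' vertices is $P^3$, not $S^3$. Consequently your identification $V\cong\cone(S^{k-1})\cong D^k$ fails: $\coneo(P^3)$ is not locally Euclidean at the cone point (the local fundamental group there is nontrivial). So ``the open cone on a sphere being Euclidean space,'' which you describe as the formal endgame, is precisely the step that breaks, while the part you flag as the main obstacle (the combinatorial bookkeeping) is in fact fine.

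This is why the paper's proof is worded the way it is: it asserts only that $\cone(\Lk(J,L))$ is a homology manifold which ``becomes locally Euclidean once we take its product with a Euclidean space of dimension at least $|J|$,'' and records that $\dim\mfj=|J|+\dim M_{I(n)}\ge |J|$ to guarantee such a factor is available (the one place where no Euclidean factor is present, $J=\emptyset$, has link all of $L$, whose cone \emph{is} an honest disk since $|L|\cong S^n$). Your argument is repairable along the same lines, even without invoking heavy machinery: a point in the open simplex of $J_0$ in $|L|\cong S^n$ has a neighborhood homeomorphic to $\rr^{n-k}\times\coneo(\Lk(J_0,L))$, so that product is an open subset of $S^n$ and hence a manifold; since $d=\dim M\ge n+1$ (because $M_{I(n)}\ne\emptyset$ consists of strata of codimension $n+1$), your local model satisfies $\rr^{d-k}\times\coneo(\Lk(J_0,L))\cong\rr^{d-n}\times\bigl(\rr^{n-k}\times\coneo(\Lk(J_0,L))\bigr)$, a manifold even when the link is not a sphere. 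But as written, the claim that the link triangulates a sphere, and hence that $V\cong D^k$, is a genuine error rather than a routine verification. (A minor side point: your compactness argument also needs $M$ itself compact, an implicit standing assumption in both your proof and the paper's.)
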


As we pointed out in Remark~\ref{r:310}, part (a) of Proposition~\ref{p:closed2} is a special case of part (b).  Proposition~\ref{p:closed1}  is also essentially a special case of part (b).  So, we only prove part (b).

\begin{proof}[Proof of Proposition~\ref{p:closed2}\,(b)]
It follows from \eqref{e:pullback} in the definition of a pullback that $f^*(M)$ is a union of pieces of the form, $J\times \mfj$, where $J\in \cs(L)$ and where $\mfj$ denotes the relative interior of $M_{\fjcheck}$.  A neighborhood of $\emptyset \times M_{I(n)}$ in $f^*(L)$ has the form $\cone (L)\times M_{I(n)}$ which is a manifold since $\cone(L)$ is a disk.  Similarly, $J\times \mfj$ has a neighborhood of the form $\cone(\Lk (J,L)) \times \mfj$ and this is a manifold since $\cone(\Lk(J,L)$ is a homology manifold and becomes locally Euclidean once we take its product with a Euclidean space of dimension at least $|J|$.  (Note: $\dim \mfj=|J| +\dim M_{I(n)} \ge |J|$.)
\end{proof}

Theorem~\ref{t:asphericalflag} and Proposition~\ref{p:closed1} can be  combined as follows.

\begin{proposition}\label{p:polyman}
Suppose  a flag complex $L$ is a triangulation of a sphere, that $\bman=\{(M(i),\partial M(i))\}_{i\in I}$ is a family of manifolds with boundary indexed by the vertex set $I$ of $L$ and, as in Definition~\ref{d:apair}, that $(M(i),\partial M(i))$ is an aspherical pair.  Then the polyhedral product $\zl\bman$ is a closed aspherical manifold.
\end{proposition}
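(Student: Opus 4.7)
The plan is to deduce this proposition as a clean synthesis of two results already in hand: Proposition~\ref{p:closed1}, which guarantees closed-manifold structure, and Theorem~\ref{t:asphericalflag}, which characterizes asphericity of a polyhedral product. No new technical machinery should be required; the hypotheses have been chosen precisely so that both previous results apply.

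First, I would observe that the hypothesis that $L$ triangulates a sphere is exactly what Proposition~\ref{p:closed1} needs, so $\zl\bman$ is automatically a closed manifold. Then, to upgrade this to asphericity, I would invoke Theorem~\ref{t:asphericalflag} applied to the family $\ab = \bman$, and simply check the three conditions (i), (ii), (iii). Condition (i) — that each $M(i)$ is aspherical — is built into Definition~\ref{d:apair} of an aspherical pair. Condition (ii) — that the pair $(M(i),\partial M(i))$ is aspherical for each non-conelike vertex — holds by hypothesis for \emph{every} vertex, hence a fortiori for the non-conelike ones. Condition (iii) — that $L$ be a flag complex — is given.

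The only subtlety worth mentioning, though it does not constitute a real obstacle, is the behavior at conelike vertices. If $v \in L$ were conelike, then, since $L$ is flag, $L$ would decompose as the join $v * L'$ of $v$ with the full subcomplex on the remaining vertices; but any such join is a cone and hence contractible, which contradicts $L \cong S^k$ for $k \ge 1$. In the degenerate case $L \cong S^0$, there are no edges at all, so again no vertex is conelike. Thus condition (ii) is either vacuous or applies uniformly to all vertices, and in either case the hypothesis on aspherical pairs suffices. I do not anticipate any genuine difficulty; the proof essentially amounts to citing the two earlier statements and recording these trivial verifications.
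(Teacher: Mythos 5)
Your proposal is correct and is essentially the paper's own argument: Proposition~\ref{p:polyman} is obtained precisely by combining Proposition~\ref{p:closed1} (closed manifold, since $L$ triangulates a sphere) with Theorem~\ref{t:asphericalflag} (asphericity, conditions (i)--(iii) being immediate from the aspherical-pair hypothesis and flagness of $L$). Your aside showing that a flag sphere triangulation has no conelike vertices is correct but unnecessary, since the hypothesis already gives an aspherical pair at \emph{every} vertex, which is all condition (ii) can ask for.
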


From now on suppose  $M$ is a corner of manifolds with set of codimension one faces, $\cm=\{M_i\}_{i\in I(n)}$, satisfying the following conditions (cf.\ \S\ref{ss:aspflag}).
\begin{enumeratei'}
\item\label{i:1}
$M$ is aspherical.
\item
The induced mirror structure $\wcm$ on the universal cover is acyclic.
\item
$\wcm$ is  flag.
\end{enumeratei'}

Theorem~\ref{t:flag} and  Proposition~\ref{p:closed2}\,(a) can be  combined as follows.
\begin{proposition}\label{p:refltrick}
With hypotheses as above, suppose $(W,S)$ is a spherical Coxeter system of rank $n+1$.  Then $\cu(W,M)$ is a closed aspherical manifold.
\end{proposition}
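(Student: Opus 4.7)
The plan is essentially to observe that Proposition~\ref{p:refltrick} is a direct amalgamation of two results already in hand: Theorem~\ref{t:flag} supplies asphericity under hypotheses (i)$'$, (ii)$'$, (iii)$'$, and Proposition~\ref{p:closed2}(a) supplies the closed manifold conclusion from the corner-of-manifolds structure. Since a spherical Coxeter system of rank $n+1$ is by definition one whose group $W$ is finite of rank $n+1$, the hypothesis on $W$ is common to both results, and the two conclusions can be read off simultaneously.

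Concretely, I would proceed in two short steps. First, note that the data $(M,\cm)$ satisfies precisely the three conditions (i)$'$, (ii)$'$, (iii)$'$ required by Theorem~\ref{t:flag}, so $\cu(W,M)$ is aspherical. Second, because $\cm=\{M_i\}_{i\in I(n)}$ is a corner-of-manifolds structure on $M$ (in the sense of Definition~\ref{d:mfldcor}), Proposition~\ref{p:closed2}(a) applies to the finite Coxeter group $W$ of rank $n+1$ and yields that $\cu(W,M)$ is a closed manifold. Putting the two conclusions together gives Proposition~\ref{p:refltrick}.

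There is no genuine obstacle: the only subtlety worth flagging is a bookkeeping check that the index set $I(n)$ used to label the codimension-one strata of $M$ is the same one indexing the fundamental generators $S$ of $W$, so that the basic construction $\cu(W,M)$ in Proposition~\ref{p:closed2}(a) agrees with the one appearing in Theorem~\ref{t:flag}. Once this is noted, the proof is a one-line citation. For completeness I would also remark (as already observed in Remark~\ref{r:310}) that the result could alternatively be deduced from the pullback version by taking $L$ to be the Coxeter complex of $(W,S)$ with its canonical coloring $f:L\to\gD$, combining Theorem~\ref{t:asphericalcorner} with Proposition~\ref{p:closed2}(b); but the direct route through Theorem~\ref{t:flag} and Proposition~\ref{p:closed2}(a) is the cleanest.
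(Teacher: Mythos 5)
Your proof is correct and follows exactly the paper's own route: the paper obtains Proposition~\ref{p:refltrick} by simply combining Theorem~\ref{t:flag} (which gives asphericity of $\cu(W,M)$ under the standing hypotheses (i)$'$, (ii)$'$, (iii)$'$) with Proposition~\ref{p:closed2}\,(a) (which gives that $\cu(W,M)$ is a closed manifold). Your bookkeeping remark about the common index set $I(n)$ and the alternative pullback route via Remark~\ref{r:310} are both consistent with the paper; nothing further is needed.
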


Finally, we can combine Theorem~\ref{t:asphericalcorner} and Proposition~\ref{p:closed2}\,(b) to get the following proposition.

\begin{proposition}\label{p:pullbackman}
With hypotheses as above, suppose 
$L$ is a flag triangulation of a sphere and that $f:L\to \gD$ is a coloring.  Then $f^*(M)$ is a closed aspherical manifold.
\end{proposition}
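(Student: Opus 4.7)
The plan is simply to combine the two theorems that have already been proved. The hypotheses of Proposition~\ref{p:pullbackman} are tailored so that both Theorem~\ref{t:asphericalcorner} and Proposition~\ref{p:closed2}\,(b) apply to the same data $(M,\cm,L,f)$. Thus the whole argument is a two-line assembly, and there is essentially no new content beyond observing that the hypotheses of these two earlier results hold simultaneously.

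First I would invoke Theorem~\ref{t:asphericalcorner}. The input it requires is conditions (i)$'$, (ii)$'$, (iii)$'$ on the corner structure $\cm$ together with the assumption that $L$ is a flag complex; all of these are either part of the standing hypotheses above the statement of Proposition~\ref{p:pullbackman} or part of its hypotheses. The conclusion is that $f^*(M)$ is aspherical.

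Second I would invoke Proposition~\ref{p:closed2}\,(b). Its input is that $\{M_i\}_{i\in I(n)}$ be a corner-of-manifolds structure (given), that $L$ be a triangulation of a sphere (given), and that $f:L\to\gD$ be a coloring (given). The conclusion is that $f^*(M)$ is a closed manifold. Combining the two conclusions, $f^*(M)$ is a closed aspherical manifold.

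There is no main obstacle, since both cited results are available in the preceding text. The only thing to double-check is the coherence of the hypotheses: Theorem~\ref{t:asphericalcorner} is stated for a general corner of spaces, so the fact that $M$ is a corner of manifolds is not used there, while Proposition~\ref{p:closed2}\,(b) is purely topological and does not require asphericity, so the two theorems operate on disjoint aspects of the data and no compatibility issue arises.
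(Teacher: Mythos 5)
Your proposal is correct and is exactly the paper's argument: the proposition is stated immediately after the remark that one can ``combine Theorem~\ref{t:asphericalcorner} and Proposition~\ref{p:closed2}\,(b),'' with asphericity coming from the standing conditions (i)$'$, (ii)$'$, (iii)$'$ plus flagness of $L$, and closedness from the sphere triangulation and the coloring. Your observation that the two cited results use disjoint aspects of the data matches the paper's intent, so nothing further is needed.
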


\end{document}